\title{The  Ramsey number of loose cycles versus cliques}
\author{Ar\`es M\'eroueh\footnote{Department of Pure Mathematics and Mathematical Statistics, Centre for Mathematical Sciences, Wilberforce Road, Cambridge, CB3 0WB, UK. E-mail: \href{mailto:a.j.meroueh@dpmms.cam.ac.uk}{a.j.meroueh@dpmms.cam.ac.uk}.}  }
\newtheorem{theorem}{Theorem}[section]
\newtheorem{conjecture}[theorem]{Conjecture}
\newtheorem{lemma}[theorem]{Lemma}
\newtheorem{proposition}[theorem]{Proposition}
\newtheorem{definition}[theorem]{Definition}
\newtheorem{claim}[theorem]{Claim}
\newcommand{\N}{\mathbb{N}}
\newcommand{\el}{E_{\text{light}}}
\newcommand{\eh}{E_{\text{heavy}}}
\newcommand{\ph}{P_{\text{heavy}}}
\newcommand{\pl}{P_{\text{light}}}
\newcommand{\red}{H^*}
\begin{document}
\maketitle

\begin{abstract}

Recently Kostochka, Mubayi and Verstra\"ete \cite{mubaykosver} initiated the study of the Ramsey numbers of uniform loose cycles versus cliques. In particular they proved that $R(C^r_3,K^r_n) = \tilde{\theta}(n^{3/2})$ for all fixed $r\geq 3$. For the case of loose cycles of length five they proved that $R(C_5^r,K_n^r)=\Omega((n/\log n)^{5/4})$ and conjectured that $R(C^r_5,K_n^r) = O(n^{5/4})$ for all fixed $r\geq 3$. Our main result is that $R(C_5^3,K_n^3) = O(n^{4/3})$ and more generally for any fixed $l\geq 3$ that $R(C_l^3,K_n^3) = O(n^{1 + 1/\lfloor(l+1)/2 \rfloor})$. 

We also explain why for every fixed $l\geq 5$, $r\geq 4$, $R(C^r_l,K^r_n) = O(n^{1+1/\lfloor l/2 \rfloor})$ if $l$ is odd, which improves upon the result of Collier-Cartaino, Graber and Jiang \cite{colgrajiang} who proved that for every fixed $r\geq 3$, $l\geq 4$, we have $R(C_l^r,K_n^r) = O(n^{1 + 1/(\lfloor l/2 \rfloor-1)})$.  
\end{abstract}
\section{Introduction}
\label{introduction}
A loose cycle of length $l$ is a hypergraph made of $l$ edges $e_1$, $e_2$, \ldots, $e_l$ such that, for any $i,j$, if $j = i+1 \pmod n$ or $j = i-1 \pmod n$ then $|e_i \cap e_j| = 1$ and otherwise $e_i \cap e_j = \emptyset$. For brevity, we shall denote by $C_l$ such a hypergraph. An $r$-uniform loose cycle of length $l$ is a loose cycle of length $l$ whose edges all have size $r$. We shall denote by $C_l^r$ such a hypergraph. This is one of the possible generalizations of graph cycles, and indeed corresponds to a cycle in the graph sense when $r=2$. An $r$-uniform clique of order $n$ is an $r$-uniform hypergraph on $n$ vertices where all the sets of $r$ vertices form an edge. We shall denote by $K_n^r$ such a hypergraph. 

The Ramsey number of an $r$-uniform loose cycle of length $l$ versus an $r$-uniform clique of order $n$, denoted by $R(C_l^r,K_n^r)$, is the least integer $m$ such that whenever the edges of $K_m^r$ are coloured red and blue then $K_m^r$ either contains a red $C_l^r$ (that is, a copy of $C_l^r$ all of whose edges are coloured red) or a blue $K_n^r$ (that is, a copy of $K_n^r$ all of whose edges are coloured blue).  Determining the order of magnitude of $R(C_l^2,K_n^2)$ is a classical problem in graph theory. The best lower bound on $R(C_l^2,K_n^2)$ is due to Bohman and Keevash \cite{bohkee}. They proved that $R(C_l^2,K_n^2) = \Omega(n^{1+1/(l-2)}/\log(n))$. For $l$ even, the best upper bound is due to Caro, Li, Rousseau and Zhang \cite{carolirousseauzhang}; they proved that $R(C_l^2,K_n^2)= O((n/\log(n))^{1 + 1/(l/2 - 1)})$. For $l$ odd, the best upper bound is due independently to Li and Zang \cite{lizang} and to Sudakov \cite{sudakov}. They proved that $R(C_l^2,K_n^2) = O(n^{1 + 1/\lfloor l/2\rfloor}/\log(n)^{1/\lfloor l/2 \rfloor})$. In the light of this, in subsequent discussion we shall refer to $n^{1 + 1/(\lfloor (l-1)/2\rfloor)}$ as ``the graph bound".  

Recently Kostochka, Mubayi and Verstra\"ete initiated the study of $R(C_l^r, K_n^r)$ for $r \geq 3$. In \cite{mubaykosver} they proved the following theorems. 

\begin{theorem}[Kostochka, Mubayi and Verstra\"ete \cite{mubaykosver}]
\label{mubaykosver1}
There exist constants $a,b_r >0$ such that 
$$a\frac{n^{3/2}}{(\log n)^{3/4}} \leq R(C_3^3,K_n^3) \leq b_3n^{3/2}, $$
and for $r\geq 4$,
$$\frac{n^{3/2}}{(\log n)^{3/4+o(1)}} \leq R(C_3^r,K_n^r) \leq b_rn^{3/2}. $$
\end{theorem}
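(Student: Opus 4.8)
The plan is to prove the order of magnitude $R(C_3^r,K_n^r)=\tilde{\Theta}(n^{3/2})$ by separating the two directions and passing to the standard reformulation in terms of independence numbers. Writing $G$ for the red hypergraph, a colouring of $K_m^r$ with no red $C_3^r$ and no blue $K_n^r$ is exactly a $C_3^r$-free $r$-graph $G$ on $m$ vertices with independence number $\alpha(G)<n$ (an independent set being a set spanning no red edge, i.e.\ a blue clique). Thus $R(C_3^r,K_n^r)\le m$ is equivalent to the assertion that every $C_3^r$-free $r$-graph on $m$ vertices satisfies $\alpha(G)\ge n$, and the whole theorem reduces to pinning down, up to logarithmic factors, the smallest possible independence number of a $C_3^r$-free $r$-graph on $m$ vertices. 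The target is $\alpha(G)=\tilde{\Theta}(m^{2/3})$, so that $\alpha(G)\ge n$ precisely when $m\gtrsim n^{3/2}$.

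For the upper bound the naive approach fails, and this is the heart of the matter. The loose-triangle Tur\'an number is as large as $\Omega(m^{r-1})$ (realised by the star of all edges through a fixed vertex), so bounding $e(G)$ and feeding it into the generic hypergraph bound $\alpha(G)\gtrsim m/d^{1/(r-1)}$ only yields $\alpha(G)\gtrsim m^{1/(r-1)}$, far short of $m^{2/3}$. Loose-triangle-freeness must therefore be exploited locally. The structural fact I would isolate is a codegree dichotomy: if three vertices are pairwise contained in edges of $G$ and each of the three pairwise codegrees exceeds a constant $C_r$, then one can greedily choose disjoint private completions (each edge has $r-2$ free vertices) and assemble a loose triangle; hence the pairs of codegree at least $C_r$ span a triangle-free auxiliary graph. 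This forces all edge concentration into a near-star part, which automatically carries a large independent set, while leaving a near-linear remainder. Handling that remainder is where simple deletion must be upgraded: such hypergraphs may still have $m^{2-o(1)}$ edges (Behrend-type constructions), so naive sampling only returns an independent set of size $m^{1/2-o(1)}$, and yet one must produce one of size $m^{2/3}$. Extracting it from the loose-triangle structure, and reconciling the concentrated and the spread-out regimes in a single estimate, is the main obstacle and exactly the point where the trivial exponent $1/(r-1)$ gets boosted to $2/3$; it cannot come from edge counts or merely logarithmic (Shearer/semi-random) gains.

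For the lower bound I would exhibit a single $C_3^r$-free $G$ on $m\approx n^{3/2}/\mathrm{polylog}$ vertices with $\alpha(G)<n$. Colouring each $r$-set red independently with probability $p\approx(\log m)/n^{r-1}$ is the threshold that destroys every blue $K_n^r$, since a fixed $n$-set is red-edge-free with probability $(1-p)^{\binom{n}{r}}$ and a union bound over the $\binom{m}{n}$ candidate sets then closes. A plain alteration (deleting one edge per red loose triangle) loses too much, so I would instead run the loose-triangle-free random greedy process, whose output is $C_3^r$-free and whose independence number provably tracks the extremal value $O\!\left(m^{2/3}(\log m)^{1/2}\right)$; forcing this below $n$ gives $m=\Omega(n^{3/2}/(\log n)^{3/4})$ for $r=3$, and a slightly coarser analysis of the same process for $r\ge 4$ yields the stated $(\log n)^{3/4+o(1)}$ loss. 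Both halves thus meet, up to the logarithmic factor, at the independence number of $C_3^r$-free hypergraphs, with the genuine difficulty concentrated in the upper bound.
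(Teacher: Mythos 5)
First, a point of order: Theorem \ref{mubaykosver1} is quoted by this paper from \cite{mubaykosver} and is not proved here, so your attempt can only be measured against the machinery of Sections 3--6, which does in effect reprove the upper bound for $r=3$. Your reduction to independence numbers is correct, and so is your structural observation that pairs of large codegree span a triangle-free auxiliary graph (this is the $l=3$ case of Lemma \ref{nocl}, with ``heavy pairs'' playing the role of your high-codegree pairs). But your upper-bound argument stops exactly where the proof has to happen: you declare the passage from $m^{1/2}$ to $m^{2/3}$ on the near-linear part to be ``the main obstacle'' and assert that it ``cannot come from edge counts or merely logarithmic gains'' --- and then supply no mechanism at all. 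In fact the mechanism is a degree dichotomy combined with a local link argument, precisely the kind of combination you ruled out. If the average degree $d$ of the $C_3^3$-free $3$-graph is at most $m^{2/3}$, Proposition \ref{lowdegreebound} (Spencer, with $r=3$, exponent $1/(r-1)=1/2$ rather than the graph exponent) already gives an independent set of size at least $0.5\,m/d^{1/2}\geq 0.5\,m^{2/3}$. If $d\geq m^{2/3}$, then after splitting off the heavy pairs there is a vertex $v$ whose link yields $\Omega(d)$ vertices $x_i$, any two of which are joined by a simple path of length two through $v$ that can be rerouted around any bounded set of forbidden vertices (Lemma \ref{existence}); an edge spanned by these $x_i$ would close a loose triangle, so they contain an independent set of size $\Omega(d)=\Omega(m^{2/3})$ (Lemma \ref{niceindependence}; see the $m=1$ case at the start of Section \ref{maintheoremsection}). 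Balancing at $d=m^{2/3}$ finishes. Note that this disposes of the very example you raise as the obstacle: Behrend-type hypergraphs are linear with $d=m^{1-o(1)}$, so the link argument gives an independent set of size $m^{1-o(1)}$, far above $m^{2/3}$. Your ``near-star'' heuristic is also off: the triangle-free graph of heavy pairs can be dense bipartite, and what is actually needed is a set independent simultaneously in the light edges and the heavy pairs, which is the role of the reduced hypergraph $\red$ in this paper.

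The lower bound half of your proposal is a black box: everything rests on the assertion that the loose-triangle-free random greedy process produces a $C_3^r$-free $r$-graph on $m$ vertices with independence number $O\left(m^{2/3}(\log m)^{1/2}\right)$. Nothing in the proposal proves this, and it is not a fact one gets for free --- analysing $H$-free processes is a substantial undertaking even for graphs (cf. \cite{bohkee}), and establishing such an independence-number bound for a loose-triangle-free construction is essentially the entire content of the lower bound in \cite{mubaykosver}. (Your threshold computation $p\approx(\log m)/n^{r-1}$ concerns the plain random colouring, which you then rightly abandon, so it carries no weight; your remark that naive alteration only reaches $m^{3/4}$-type independence, i.e.\ $R=\Omega(n^{4/3-o(1)})$, is correct but only explains why something stronger is needed.) In short: correct framing and a correct diagnosis of where the difficulty lies, but in both directions the step that actually produces the exponent $3/2$ is missing.
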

For loose cycles of length five, they proved the following. 
\begin{theorem}[Kostochka, Mubayi and Verstra\"ete \cite{mubaykosver}]
\label{mubaykosver2}
There exist constants $c_r>0$ such that 
$$R(C_5^r,K^r_n) \geq c_r \left(\frac{n}{\log n} \right)^{5/4}. $$
\end{theorem}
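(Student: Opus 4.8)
The plan is to prove the lower bound by exhibiting, for the appropriate $m$, a red/blue colouring of $K_m^r$ with no red $C_5^r$ and no blue $K_n^r$. Writing $H$ for the $r$-uniform hypergraph of red edges, this amounts to constructing a $C_5^r$-free hypergraph $H$ on $m$ vertices whose independence number (the largest vertex set spanning no edge of $H$) satisfies $\alpha(H) < n$; indeed a blue $K_n^r$ is exactly an $n$-set meeting no red edge. Setting $m \asymp (n/\log n)^{5/4}$ and solving for $n$, the task becomes to build a $C_5^r$-free $H$ on $m$ vertices with $\alpha(H) \lesssim m^{4/5}\log m$.

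The first thing I would try is the random hypergraph $H = H^r(m,p)$ together with alteration. For the independence number one takes a union bound: an $n$-set spans no edge with probability $(1-p)^{\binom{n}{r}} \le e^{-pn^r/r!}$, so the expected number of independent $n$-sets is at most $\binom{m}{n} e^{-pn^r/r!}$, which tends to $0$ once $p n^{r-1} \gtrsim \log m$; thus $\alpha(H) \lesssim (\log m / p)^{1/(r-1)}$. For the cycles one deletes an edge from each copy of $C_5^r$. Since $C_5^r$ has $5(r-1)$ vertices and $5$ edges, the expected number of copies is $\asymp m^{5(r-1)}p^5$ while the expected number of edges is $\asymp m^r p$, so deletion costs only a constant fraction of the edges precisely when $p \lesssim m^{-(4r-5)/4}$. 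At this threshold the hypergraph has $\asymp m^{5/4}$ edges and $\alpha(H) \asymp m^{(4r-5)/(4(r-1))}(\log m)^{1/(r-1)}$, which for $r=3$ gives $\alpha(H) \asymp m^{7/8}\sqrt{\log m}$ and only the Ramsey exponent $8/7$. The same barrier appears if one runs the random greedy $C_5^r$-free process: a candidate edge lies in $\asymp m^{4r-5}p^4$ incipient copies, so the process also stalls at $p \asymp m^{-(4r-5)/4}$.

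The reason the random approach is stuck is now the heart of the matter. To force $\alpha(H) \lesssim m^{4/5}$ the first-moment calculation demands $p \gtrsim m^{-4(r-1)/5}\log m$, i.e. about $m^{(r+4)/5}$ edges; but at that density the expected number of copies of $C_5^r$ is $\asymp m^{r-1}(\log m)^5$, which dwarfs the number of edges for every $r \ge 3$. Neither deletion nor the $C_5^r$-free process can survive this, because a typical random hypergraph dense enough to have small independence number is saturated with loose five-cycles. Beating $8/7$ and reaching $5/4$ therefore requires a construction that is \emph{pseudorandom and equidistributed} — so that every set of $\asymp m^{4/5}\log m$ vertices still spans an edge, giving the independence bound directly rather than through a random model — yet has \emph{far fewer loose five-cycles than a random hypergraph of the same density}. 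The key structural point I would exploit is that avoiding a loose five-cycle is the avoidance of a single \emph{odd} cyclic configuration, the hypergraph analogue of forbidding one odd graph cycle; this is exactly the kind of restriction that a bipartite-like or algebraically defined structure can satisfy while remaining dense, thereby exceeding the density barrier that defeats the random (girth-type) constructions.

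The main obstacle is thus to reconcile three demands at once: density about $m^{(r+4)/5}$, absence of loose five-cycles, and small independence number. These conflict for random hypergraphs, so the whole difficulty sits in the explicit/pseudorandom construction — producing an $r$-uniform hypergraph of this density whose edges are spread out enough to kill all large independent sets while the odd loose-cycle structure is suppressed, the tension being that the natural way to suppress odd cycles (a bipartite-type construction) tends to create huge independent sets. I expect the construction to come from an algebraic or random-algebraic rule, and its analysis to rest on controlling the counts of loose paths, of shorter loose cycles, and of codegrees finely enough both to certify the absence of $C_5^r$ and to run the equidistribution estimate; the logarithmic loss in $(n/\log n)^{5/4}$ should enter precisely through this independence/covering estimate.
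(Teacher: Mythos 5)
First, a point of order: the statement you were asked to prove is Theorem~\ref{mubaykosver2}, which this paper imports from \cite{mubaykosver} and never proves; there is no in-paper argument to compare against, so your attempt can only be measured against what the statement itself requires, namely the existence, for each $r$, of a $C_5^r$-free $r$-graph on $m \asymp (n/\log n)^{5/4}$ vertices with independence number below $n$.

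Measured that way, your proposal has a genuine gap: it contains no construction, and the construction \emph{is} the theorem. Your reduction of the Ramsey bound to building a $C_5^r$-free $H$ on $m$ vertices with $\alpha(H) \lesssim m^{4/5}\log m$ is correct, and so is the arithmetic showing that any random-like hypergraph dense enough for the first-moment independence bound (about $m^{(r+4)/5}$ edges) contains far more loose five-cycles than edges for every $r \ge 3$. But after establishing that $H^r(m,p)$ cannot work, you stop: ``I expect the construction to come from an algebraic or random-algebraic rule'' is a statement of hope, not a proof. Diagnosing the obstruction is the easy half of the problem; the content of the Kostochka--Mubayi--Verstra\"ete theorem is precisely the object whose existence you defer --- a hypergraph that is simultaneously dense, equidistributed enough to kill all sets of size $m^{4/5}\log m$, and free of loose five-cycles --- together with the verification of all three properties at once. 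The tension you point out (structures that suppress odd cyclic configurations tend to contain enormous independent sets) is real, but naming the difficulty that the actual proof must overcome is not the same as overcoming it.

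Second, the quantitative claims you do make are not all sound, so even the negative part of your argument would need repair. In the deletion step you argue that when $p \lesssim m^{-(4r-5)/4}$ one can delete an edge from each copy of $C_5^r$ at the cost of a constant fraction of the edges, and you then quote the bound $\alpha(H) \lesssim (\log m/p)^{1/(r-1)}$. That bound is a union bound over $n$-sets for the \emph{undeleted} random hypergraph; once you delete a set of edges correlated with the hypergraph, it no longer applies --- the deleted edges could exhaust all edges inside some candidate set. The correct requirement is that every candidate $n$-set spans more edges than the total number of cycle copies, and at your own parameters for $r=3$ (that is, $p \asymp m^{-7/4}$ and $n \asymp m^{7/8}$) this fails: an $n$-set spans only about $n^3p \asymp m^{7/8}$ edges, while there are about $m^{5/4}$ copies to destroy. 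A rigorous deletion argument balances $n^rp$ against $m^{5(r-1)}p^5$ and for $r=3$ yields only $\alpha \asymp m^{10/11}$ up to logarithms, i.e.\ Ramsey exponent $11/10$, weaker than the $8/7$ you claim. Likewise, your one-line assertion that the $C_5^r$-free random greedy process ``also stalls at $p \asymp m^{-(4r-5)/4}$'' is unsupported, and this is exactly the kind of step one cannot wave at: in the graph setting it is precisely the Bohman--Keevash analysis of the $H$-free process \cite{bohkee} that beats deletion. None of this changes your (correct) overall conclusion that these uniform random methods top out strictly below the exponent $5/4$; it does mean that both halves of your write-up --- the obstruction analysis and, above all, the missing construction --- fall short of a proof.
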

They also provided a more general lower bound of the form $R(C_l^r,K_n^r) = \Omega(n^{1+ 1/(3l -1)}) $ for any fixed $r$ and $l$. They made the following conjecture.
\begin{conjecture}[Kostochka, Mubayi and Verstra\"ete \cite{mubaykosver}]
\label{c5conjecture}
For any fixed $r\geq 3$ we have
$$R(C_5^r,K^r_n) = O(n^{5/4}). $$
\end{conjecture}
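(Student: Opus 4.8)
The plan is to reformulate the conjecture as a lower bound on the independence number of $C_5^r$-free hypergraphs. Given a red/blue colouring of $K_m^r$, let $H$ be the $r$-uniform hypergraph of red edges. A set $S$ of vertices spans a blue $K_n^r$ precisely when $|S| \ge n$ and $S$ is independent in $H$, so the colouring avoids a blue clique exactly when $\alpha(H) < n$, where $\alpha$ denotes the independence number. Thus, writing $m = C n^{5/4}$, it suffices to prove the purely extremal statement that every $C_5^r$-free $r$-uniform hypergraph on $m$ vertices satisfies $\alpha(H) = \Omega(m^{4/5})$; for $C$ large this forces $\alpha(H) \ge n$, and hence a blue $K_n^r$ whenever there is no red loose five-cycle. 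The exponent $4/5$ is exactly what the conjecture asks for, and the gap to be bridged is that the elementary methods below seem to deliver only $\Omega(m^{3/4})$, which corresponds to the weaker bound $O(n^{4/3})$.

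To extract the independent set I would first regularise $H$: repeatedly deleting vertices of less than average degree yields a non-empty subhypergraph on $m'$ vertices with all degrees comparable to some $d$, at the cost of only a constant fraction of the mass. The hypergraph Tur\'an bound (Spencer's generalisation of Caro--Wei) then gives $\alpha(H) = \Omega(m'/d^{1/(r-1)})$, so for $r=3$ one would be done if $C_5^3$-freeness forced the average degree down to $d = O(m^{2/5})$. It does not: as in the graph case, where $C_5$-free bipartite graphs are dense, loose five-cycle freeness places no global bound on the number of edges, since loose paths and loose triangles are unrestricted. The way around this, following the odd-cycle arguments of Sudakov and of Li and Zang, is to exploit \emph{local} sparseness rather than a global edge count. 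The key structural input should be that in a $C_5^r$-free hypergraph the link of a vertex—and, more generally, the configuration reachable by short loose paths from a fixed edge—cannot contain the pieces that would close up into a loose five-cycle; this forbids long paths in an auxiliary graph built from the links, forcing each neighbourhood to be sparse. Feeding this local sparseness into a Shearer/Ajtai--Koml\'os--Szemer\'edi-type independent-set bound, which improves on Tur\'an precisely when neighbourhoods carry few edges, is what should lift the exponent above $3/4$.

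The hard part is making the local analysis quantitatively tight enough to reach $4/5$. In the hypergraph setting the link of a vertex is itself an $(r-1)$-uniform hypergraph, and translating ``no red $C_5^r$'' into usable constraints requires controlling how edges overlap: one must track codegrees and the abundance of private vertices, because lifting a short cycle in the auxiliary graph to a genuine loose cycle demands that the participating edges meet only in their designated link vertices and that enough unused vertices remain to fill the $r-2$ private slots of each edge. Pairs of vertices of high codegree, and clusters of edges meeting in complicated loose patterns, are exactly the configurations that are permitted by $C_5^r$-freeness yet degrade the independence estimate; showing that such configurations are rare enough—so that the effective local sparseness matches the conjectured $\Omega(m^{4/5})$ rather than only $\Omega(m^{3/4})$—is where I expect the real obstacle to lie, and is presumably the point at which the present techniques fall short of the full conjecture.
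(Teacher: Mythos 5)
The statement you were asked to prove is a \emph{conjecture} (due to Kostochka, Mubayi and Verstra\"ete) which the paper does not prove and which remains open; the paper's own contribution is the weaker bound $R(C_5^3,K_n^3)=O(n^{4/3})$ of Theorem \ref{maintheorem}. So there is no proof in the paper to compare against, and your proposal must be judged on its own terms as an attempted proof of the conjecture. On those terms it has a genuine gap, which you yourself acknowledge: the entire content of the conjecture is the jump from the exponent $3/4$ to $4/5$ in the bound $\alpha(H)=\Omega(m^{4/5})$ for $C_5^r$-free $r$-graphs on $m$ vertices, and your proposal supplies no mechanism for that jump. Naming the obstruction (high-codegree pairs, clusters of edges meeting in complicated loose patterns, the bookkeeping needed to lift auxiliary cycles to genuine loose cycles) is a correct diagnosis of where the difficulty lies, but a diagnosis is not an argument; nothing in the proposal bounds the frequency of those configurations, and without such a bound the Shearer/AKS-type step you invoke cannot be quantified beyond what Spencer's theorem already gives.

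What your proposal does contain---the reduction of the Ramsey bound to a lower bound on independence numbers of $C_5^r$-free hypergraphs, the observation that loose-cycle-freeness gives no global edge bound so one must exploit local structure of links and short loose paths, and the application of Proposition \ref{lowdegreebound} with $r=3$---is essentially the skeleton of the paper's actual proof of Theorem \ref{maintheorem}: the paper implements this program rigorously via light/heavy pairs, the reduced hypergraph $\red$, extenders and their neighbourhoods, and a rainbow-colouring argument, landing exactly at $\alpha(H)=\Omega(m^{3/4})$, i.e.\ $R(C_5^3,K_n^3)=O(n^{4/3})$. In other words, the part of your program that can be made to work proves the paper's theorem, not the conjecture. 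Note also that for $r\geq 4$ the paper's closing remarks (Section \ref{remarks}) exhibit a $4$-uniform example---edges $\{i,i+n,j,j+n\}$ on vertex set $[2n]$---in which no useful extenders exist, so even the $O(n^{4/3})$-type improvement is not known for $r\geq 4$; your sketch, which treats general $r$ uniformly, would founder on the same example.
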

Collier-Cartaino, Graber and Jiang  \cite{colgrajiang} proved the following. 

\begin{theorem}[Collier-Cartaino, Graber and Jiang \cite{colgrajiang}]
\label{colgrajiang1}
For any fixed $r\geq 3$ and $l\geq 4$ there exist constants $b_{r,l}$ such that 
$$R(C_l^r,K_n^r)\leq b_{r,l}n^{1 + 1/(\lfloor l/2 \rfloor-1)}. $$
\end{theorem}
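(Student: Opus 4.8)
The plan is to prove the Ramsey statement directly by an extremal argument on the red hypergraph. Fix $k=\lfloor l/2\rfloor$, set $m=b_{r,l}\,n^{1+1/(k-1)}$, and let $H$ be the red $r$-uniform hypergraph on the $m$ vertices of $K_m^r$. Assuming $H$ contains no loose cycle $C_l^r$, I will produce an independent set of size $n$ in $H$, which is precisely a blue clique $K_n^r$. Everything therefore reduces to the extremal claim: a $C_l^r$-free $r$-uniform hypergraph on $m$ vertices has independence number at least $n$ as soon as $m\geq b_{r,l}n^{1+1/(k-1)}$.

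First I would convert the small-independence hypothesis into a density statement. By the hypergraph analogue of Tur\'an's bound, an $r$-uniform hypergraph on $m$ vertices with $e$ edges has an independent set of size at least $c_r\,m^{r/(r-1)}/e^{1/(r-1)}$ (keep each vertex independently with probability $p$, delete one vertex from every surviving edge, and optimise $p$). Hence $\alpha(H)<n$ forces $e\gg m^r/n^{r-1}$, so the average degree $\bar d = re/m$ satisfies $\bar d\gg (m/n)^{r-1}$. After a routine cleaning step, deleting vertices of degree below $\tfrac12\bar d$, I may assume $H$ has minimum degree $\gg\bar d$ while retaining $\gg m$ vertices.

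The core is then a path count. A loose cycle $C_l^r$ decomposes, at two link vertices lying ``antipodally'' on it, into two internally disjoint loose paths whose edge-lengths sum to $l$; so $C_l^r$-freeness says that no pair of vertices is joined by two such internally disjoint loose paths. I would count loose paths of length $k$: extending a path at its current endpoint uses one incident edge ($\gg\bar d$ choices) and one new vertex inside it ($r-1$ choices), so by convexity the number of loose $k$-paths is at least $\gg m\,\bigl((r-1)\bar d\bigr)^{k}$. Spreading these over the at most $m^2$ endpoint pairs, the typical pair is joined by $\gg \bar d^{\,k}/m \gg (m/n)^{(r-1)k}/m$ loose $k$-paths; with $m=b_{r,l}n^{1+1/(k-1)}$ a short computation gives $(m/n)^{(r-1)k}/m \gg b_{r,l}^{\,(r-1)k-1}$, which exceeds any prescribed constant once $b_{r,l}$ is large. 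Thus some pair $(u,w)$ is joined by many loose $k$-paths (and, for odd $l$, one arranges in addition a loose $(k+1)$-path between the same pair). Two of these that are internally disjoint close up into a copy of $C_l^r$, the desired contradiction.

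The step demanding genuine care, and the main obstacle, is extracting internally disjoint loose paths of the correct lengths from a mere abundance of paths between $(u,w)$: paths sharing interior vertices (or with cross-incidences between their edges) produce shorter loose cycles rather than $C_l^r$, and $H$ is \emph{not} assumed to be free of short loose cycles. I would handle this by bounding, through codegree estimates, the number of path-pairs that overlap in any prescribed interior vertex or edge, and showing that these degenerate pairs are outnumbered by the total number of path-pairs, so a genuinely disjoint pair survives. Equally delicate is the odd-length bookkeeping, where one must pair a loose $k$-path with an internally disjoint loose $(k+1)$-path sharing only the two connectors; it is precisely this asymmetry that prevents the count from reaching the sharper odd-case exponent and limits the method to $1+1/(k-1)$, the exponent that the present paper subsequently improves.
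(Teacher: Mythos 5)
Your reduction of the Ramsey statement to an independence-number bound, the use of Spencer's bound (Proposition \ref{lowdegreebound}) to force high average degree, and the decomposition of $C_l^r$ into two internally disjoint loose paths meeting only at two vertices are all sound. The fatal step is the path count: ``extending a path at its current endpoint uses one incident edge ($\gg\bar d$ choices).'' In an $r$-uniform hypergraph that is merely $C_l^r$-free, pair codegrees are completely unbounded, so of the $\gg\bar d$ edges through the current endpoint, possibly \emph{all} of them meet the path in a second vertex, and the count of loose extensions collapses. This is not a technicality: the hypergraph exhibited in Section \ref{remarks} of this very paper, $V(H)=[2n]$ and $E(H)=\{\{i,i+n,j,j+n\}: i,j\in[n], i\neq j\}$, is free of all loose cycles, is regular of degree $n-1$ (so $\bar d\approx n$, far above any threshold your argument needs, and no cleaning step changes anything), and yet contains no loose path of length two whatsoever, since any two of its edges meet in $0$ or $2$ vertices. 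So the claim that the number of loose $k$-paths is $\gg m\bigl((r-1)\bar d\bigr)^{k}$ is false, and the same unboundedness of codegrees defeats your later plan of ``codegree estimates'' for discarding overlapping path-pairs --- there is no a priori codegree bound available to estimate against. Even for $r=3$ the extension count is unjustified as stated.

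What is missing is the key preprocessing idea of Collier-Cartaino, Graber and Jiang: before any counting, one must pass to an (almost) linear structure. Using the Erd\"{o}s--Rado Sunflower Lemma (Proposition \ref{sunflower}), one repeatedly replaces all edges containing the core $C$ (with $|C|\geq 2$) of a large sunflower by the single edge $C$; this yields a $C_l$-free $[2,r]$-graph $H'$ with $V(H')=V(H)$ and $\alpha(H)\geq\alpha(H')$ in which every pair of vertices lies in $O_{r,l}(1)$ edges, so that $E(H')$ partitions into a bounded number of simple hypergraphs --- exactly the reduction this paper sketches in its proof of Theorem \ref{graphbound}, following Lemmas 7.5--7.7 of \cite{colgrajiang}. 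Only after this reduction do degree-based extension counts of the kind you propose become legitimate, because each vertex already on the path then excludes only $O_{r,l}(1)$ of the $\gg\bar d$ candidate edges, and the extraction of two internally disjoint paths between a popular pair can be carried out. With that reduction inserted, your outline essentially becomes the argument of \cite{colgrajiang}; as written, without it, the central counting step is refuted by the example in Section \ref{remarks}.
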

For $l$ even, they are able to improve this bound by a polylogarithmic factor. We notice that this proves that the graph bound holds for $R(C_l^r,K_n^r)$ when $l$ is even but falls short when $l$ is odd. We also notice that for $r=3$ and $l=5$ this says that $R(C_5^3,K_n^3) = O(n^2)$.  

Our main result is the following.
\begin{theorem}
\label{maintheorem}
There exists a constant $c_{3,5}$ such that $R(C_5^3,K_n^3) \leq c_{3,5}n^{4/3}$ and more generally for any fixed $l\geq 3$ there exists a constant $c_{3,l}$ such that $R(C_l^3,K_n^3)\leq c_{3,l} n^{1 + 1/\lfloor (l+1)/2 \rfloor}$. 
\end{theorem}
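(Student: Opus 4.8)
The plan is to argue by contraposition: writing $\red$ for the hypergraph of red triples, it suffices to show that if $\red$ is a $3$-uniform hypergraph on $m \geq c_{3,l}n^{1+1/k}$ vertices, where $k=\lfloor (l+1)/2\rfloor$, and $\red$ contains no loose cycle $C_l^3$, then $\red$ has an independent set of size $n$, since such a set spans a blue $K_n^3$. Equivalently, I would prove the Tur\'an-type statement that a $C_l^3$-free $\red$ on $m$ vertices satisfies $\alpha(\red)=\Omega(m^{k/(k+1)})$; substituting $m=c_{3,l}n^{1+1/k}$ then gives $\alpha(\red)\geq n$. First I would regularise by repeatedly deleting vertices of degree below a threshold $D\asymp m^{2/(k+1)}$. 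If this peeling exhausts every vertex then $\red$ has fewer than $mD$ edges, and the Spencer/Tur\'an bound $\alpha\gtrsim m^{3/2}/e^{1/2}$ for $3$-uniform hypergraphs already yields $\alpha(\red)\gtrsim m/\sqrt{D}\gtrsim m^{k/(k+1)}$. Otherwise we are left with a nonempty subhypergraph of minimum degree at least $D$, inside which I must now locate a loose $C_l^3$.

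To find the cycle I would pass to the shadow graph and exploit a codegree dichotomy, separating the edges into heavy and light parts $\eh$ and $\el$ according to whether the codegree of the relevant pair exceeds a constant threshold $t\geq 2l$. The central device is a lifting principle: any graph cycle $v_1v_2\cdots v_jv_1$ all of whose edges carry positive codegree lifts to a loose cycle $C_j^3$, where the private vertices can be kept distinct either by a greedy choice (when codegrees are at least $2l$) or, when the hypergraph is close to linear, because any coincidence of two private vertices would itself force a forbidden higher codegree. Thus it suffices to produce a graph cycle of length exactly $l$ in an auxiliary graph carrying positive codegree. In the light regime, where codegrees are bounded by $t$, the minimum degree $D$ forces the shadow to have minimum degree $\gtrsim D/t\asymp m^{2/(k+1)}$, and since $m^{2/(k+1)}\geq m^{1/k}$ for all $k\geq 1$ its average degree exceeds the even-cycle threshold $m^{1/k}$.

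When $l=2k$ is even this already closes the argument: the Bondy--Simonovits even-cycle theorem supplies a $C_{2k}$ in the shadow, which lifts to a loose $C_l^3$. The genuinely hard case is $l=2k-1$ odd, where $k=\lfloor(l+1)/2\rfloor$ is unchanged but a merely dense shadow guarantees only even cycles. Here I would exploit the extra freedom of the private vertices --- the mechanism responsible for a loose $C_l^3$ behaving like a graph $C_{l+2}$, and hence for beating the graph bound --- by seeking two internally vertex-disjoint loose paths between a common pair of core vertices whose lengths differ by one and sum to $l$, which close up to a loose cycle of the required odd length. This is an \emph{unbalanced} analogue of the even-cycle theorem, and it is precisely where the heavy/light decomposition of \emph{paths}, $\ph$ and $\pl$, does the work: I would grow loose paths greedily using the minimum-degree condition, track how their endpoints distribute, and run a Sudakov-type neighbourhood-expansion argument so that either a heavy endpoint pair supplies many private vertices (permitting the unbalanced closure and the parity adjustment) or the paths spread out enough that two of the correct lengths meet at a common pair. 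The heavy regime, where many pairs already have codegree above $t$, is handled in parallel: the heavy pairs then form a dense graph to which the same even-cycle and unbalanced-path machinery applies, the abundance of private vertices making the distinctness bookkeeping automatic.

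The main obstacle, and the step demanding the most care, is exactly this odd case: forcing a loose cycle of length \emph{exactly} $l$ rather than merely some length in a range, while keeping all $2l$ core and private vertices distinct throughout the path-growing and the closure. Once the unbalanced-path statement is established the two regimes combine to give $\alpha(\red)=\Omega(m^{k/(k+1)})$, and hence $R(C_l^3,K_n^3)\leq c_{3,l}n^{1+1/\lfloor(l+1)/2\rfloor}$; the case $l=5$, $k=3$ is the promised bound $n^{4/3}$.
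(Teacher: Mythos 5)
There is a genuine gap, and it is fatal to the structure of your argument rather than to a detail. In the dense case you propose to derive a contradiction from minimum degree alone: after peeling you retain a nonempty subhypergraph of minimum degree $D\asymp m^{2/(k+1)}$ ``inside which I must now locate a loose $C_l^3$.'' No argument of this shape can work, because large minimum degree does not force loose cycles in $3$-graphs. Consider the star: $V(H)=[m]$ and $E(H)=\{\{1,a,b\}:a,b\in[m]\setminus\{1\},\,a\neq b\}$. Every vertex has degree at least $m-2$, yet $H$ contains no loose cycle of any length $\geq 3$: all edges contain the vertex $1$, so for $l\geq 4$ two non-consecutive edges of a loose cycle would be disjoint yet both contain $1$, and for $l=3$ the three pairwise intersection vertices of a loose triangle would all have to equal $1$ while being distinct. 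Your heavy/light dichotomy does not rescue this: in the star every edge is heavy, but the heavy pairs form a graph \emph{star} --- neither dense nor containing any cycle --- so your heavy-regime claim (``the heavy pairs then form a dense graph'' to which cycle-finding machinery applies) is false, and the light regime is empty. This example is precisely why the paper never tries to find a cycle in the dense case. Instead it builds the reduced hypergraph $\red$ (heavy pairs as $2$-edges together with light triples), proves that $H+\red$ is $C_l$-free, and extracts from the dense structure an \emph{extender} --- a generalized neighbourhood --- whose iterated neighbourhoods in $\red$ carry large independent sets (via Pluh\'ar's path-colouring criterion and a rainbow/class argument); the real work is then gluing independent subsets taken from many disjoint extender neighbourhoods without creating edges of $\red$ between them. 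Your proposal contains no substitute for any of this: in the dense case it can only succeed by finding a cycle, and sometimes there is none to find.

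Two further points, secondary only because the one above already breaks the proof. First, your lifting principle fails exactly in the near-linear regime where you invoke it: if codegrees are bounded (even identically $1$), a coincidence of two private vertices does \emph{not} ``force a forbidden higher codegree'' --- in a linear $3$-graph two triples may legally share one vertex, so the private vertices of a shadow cycle can collide with each other or with the cycle, and a shadow $C_{2k}$ need not lift to a loose $C_{2k}^3$; this is the actual difficulty in the linear Tur\'an arguments of Collier-Cartaino--Graber--Jiang, not a triviality. Second, the odd case, which you correctly identify as the crux (and for odd $l$ is the only case the theorem adds over known results), is left as a plan: the ``unbalanced even-cycle theorem'' and the ``Sudakov-type neighbourhood-expansion'' are stated as intentions with no proof and no verification that the exponents close. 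For comparison, the paper's gain over the graph bound comes not from a stronger cycle-finding theorem but from arranging matters so that the sparse case is handled by Spencer's bound for $3$-uniform hypergraphs (independence $\gtrsim n/d^{1/2}$) rather than Tur\'an's theorem ($\gtrsim n/d$); your sparse-case computation agrees with the paper's, but everything you do after the peeling step would have to be replaced.
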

 For $l=3$ notice that this follows from Theorem \ref{mubaykosver1}. We believe that this result is interesting for two reasons. First, it brings the bound of $O(n^2)$ on $R(C_5^3,K_n^3)$ due to Collier-Cartaino, Graber and Jiang down to $O(n^{4/3})$, a bound much closer to Conjecture \ref{c5conjecture}. Secondly, the bound we obtain beats the best known upper bound on $R(C_l^2,K_n^2)$ i.e., the graph bound, for each $l\geq 3$ by an order of magnitude. In fact, we would expect that one should be able to prove that for every fixed $r \geq 3$, $l\geq 3$, there exist $\epsilon_{r,l}>0$ and $c_{r,l}>0$ such that $R(C_l^r,K_n^r) \leq c_{r,l} n^{1 + 1/\lfloor (l-1)/2 \rfloor - \epsilon_{r,l}}$. Thus Theorem \ref{mubaykosver1} settles this question for $l=3$ and $r\geq 3$, while Theorem \ref{maintheorem} settles it for $r=3$ and $l\geq 4$. However for $r\geq 4$ and $l\geq 4$ the methods we use do not seem to generalize in a straightforward way (See Section \ref{remarks} for more details.) The proof of Theorem \ref{maintheorem} relies on generalizing various ideas found in \cite{colgrajiang}, \cite{mubaykosver}, \cite{lizang} and \cite{sudakov} together with some new ones of our own. Let us now state our second result. 

\begin{theorem}
\label{graphbound}
For any fixed $r\geq 3$, $l\geq 5$, there exists $c_{r,l}>0$ such that $R(C_l^r,K_n^r) \leq c_{r,l} n^{1 + 1/\lfloor l/2 \rfloor}$ when $l$ is odd.
\end{theorem}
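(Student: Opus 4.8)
The plan is to argue by the standard reduction, bounding the independence number of $C_l^r$-free host hypergraphs. Writing $l=2k+1$, so that $k=\lfloor l/2\rfloor$, it suffices to show that every $r$-uniform $C_l^r$-free hypergraph $H$ on $m$ vertices satisfies $\alpha(H)=\Omega(m^{k/(k+1)})$: taking $H$ to be the red hypergraph in a colouring of $K_m^r$, the absence of a red $C_l^r$ then forces an independent set of size $n$, that is, a blue $K_n^r$, once $m\geq c_{r,l}\, n^{1+1/k}$. Thus the whole problem reduces to the extremal statement that $C_l^r$-free $r$-graphs have large independence number, and everything below is aimed at this.

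The framework I would use is that of Collier-Cartaino, Graber and Jiang. First I would regularise $H$: by repeatedly deleting vertices of small degree one either exhibits a large independent set directly (a greedy argument removes at most $O(d)$ vertices per chosen independent vertex, giving $\alpha(H)=\Omega(m/d)$ in the low-degree regime), or passes to a subhypergraph in which all relevant degrees are comparable to a parameter $d$. Since loose cycles are linear objects, I would also invoke their codegree-cleaning tools to discard edges meeting another edge in two or more vertices, reducing to an essentially linear, almost-$d$-regular hypergraph while losing only constant or polylogarithmic factors. In this cleaned setting a loose path or loose cycle behaves like a path or cycle in an auxiliary graph; and it is exactly here that, when $l$ is odd, the argument of Theorem \ref{colgrajiang1} is forced to spend one unit of the exponent, yielding only $n^{1+1/(k-1)}$ in place of the graph bound $n^{1+1/k}$.

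The heart of the improvement is to insert the odd-cycle idea of Li--Zang and Sudakov in place of that lossy step. The key structural lemma to prove is a loose analogue of the graph fact that in a $C_{2k+1}$-free graph the neighbourhood of any vertex spans no path on $2k$ vertices: in a $C_l^r$-free hypergraph, the loose neighbourhood of a vertex (equivalently, a small ball in the loose-connectivity metric) contains no long loose path, since such a path, together with the two edges joining its ends back to the centre, would close up into a loose cycle of length exactly $l$. Feeding this ``no long loose path'' property into a hypergraph Erd\H{o}s--Gallai / K\H{o}v\'ari--S\'os--Tur\'an type estimate shows each such loose neighbourhood is sparse and hence carries an independent set proportional to its order. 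Iterating this through the $k$ levels of a loose breadth-first search from a vertex and combining the per-level independent sets should produce an independent set of size $\Omega(d^{k})$ in the $d$-regular regime. Balancing this against the low-degree bound $\Omega(m/d)$ and optimising at $d=m^{1/(k+1)}$ then gives $\alpha(H)=\Omega(m^{k/(k+1)})$, as required.

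The main obstacle is controlling the \emph{exact} length of the loose cycle that gets closed, and doing so uniformly for all $r\geq 3$. In the graph case the parity of breadth-first-search levels makes ``odd cycle of length exactly $2k+1$'' automatic once one works inside the ball of radius $k$; in the hypergraph setting I must ensure both that the closing argument produces a loose cycle of length precisely $l$, rather than some shorter odd loose cycle, and that the vertex-disjointness demanded by looseness survives the codegree cleaning. This exact-length bookkeeping, rather than any single counting estimate, is what distinguishes the odd case and is the step I expect to be most delicate; it is also the reason this approach recovers the graph bound $n^{1+1/k}$ for odd $l$ across all $r\geq 3$, even though the sharper exponent of Theorem \ref{maintheorem} does not appear to extend beyond $r=3$.
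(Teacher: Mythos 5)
Your high-level skeleton --- reduce to bounding the independence number of $C_l^r$-free hypergraphs, clean large codegrees in the style of Collier-Cartaino--Graber--Jiang, run the Li--Zang/Sudakov odd-cycle neighbourhood strategy, and balance against a low-degree bound at $d\approx n^{1/(\lfloor l/2\rfloor+1)}$ --- is indeed the paper's plan, and your arithmetic at the end is the right one. But the steps you defer with ``should'' or flag as ``delicate'' are exactly where the proof lives, and two of your bridging claims are false as stated. First, the cleaning: you cannot ``discard edges meeting another edge in two or more vertices'', because deleting edges \emph{increases} the independence number of the host, so an independent set of the cleaned hypergraph need not be independent in $H$. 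The paper instead uses the sunflower lemma (Proposition \ref{sunflower}) to replace all edges through a common core by the core itself, producing a non-uniform $[2,r]$-graph $H'$ with $\alpha(H)\geq\alpha(H')$ whose edge set splits into a constant number of simple hypergraphs (as in Lemma \ref{lightedgesdecomposition}); the unavoidable presence of edges of size $2$ in $H'$ is precisely why this theorem only reaches the graph bound. Second, your key structural lemma fails for single-vertex neighbourhoods: in a hypergraph, the two edges joining the ends of a loose path back to the centre vertex may intersect each other, or the path, in more than one vertex, and then nothing closes up into a loose cycle. This is why the paper replaces ``vertex'' by an \emph{extender} (Definition \ref{extender}): a pair $(X,Y)$ in which any two vertices of $X$ are joined by a simple path of length two avoiding any prescribed set of at most $2l-5$ vertices. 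The fixed length two of these closing paths is also what makes the resulting cycle have length exactly $l$ (Lemma \ref{nolpath}), and constructing extenders at all is a genuine step (light/heavy pairs for $r=3$; the neighbourhood of a maximum-degree vertex inside a simple piece of $H'$ here). Your remark that in graphs ``parity makes exact length automatic'' is also inaccurate; exact length needs the Erd\H{o}s--Faudree--Rousseau--Schelp argument there, and here it needs the type/rainbow/class machinery of Lemma \ref{neighbourhoodindependence}.

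The third gap is the expansion step. Regularity does \emph{not} imply that BFS levels grow like $d^i$ --- the paper states explicitly at the start of Section \ref{maintheoremsection} that this naive argument is invalid --- so ``$\Omega(d^k)$ in the $d$-regular regime'' is unjustified. The actual argument is a dichotomy plus an iteration: either some neighbourhood $\Gamma_{S,i}(X,Y)$ with $i\leq m-1$ is already of size about $n^{m/(m+1)}$, and the colourability lemma finishes; or every extender has a level whose expansion factor is at most $n^{1/(m+1)}$, and one extracts an independent set from that level, deletes the whole ball $Y_k\cup\Gamma_{S_k,\leq i_k+1}(X_k,Y_k)$, and recurses on the rest. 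Moreover the pieces extracted in different rounds must have no edges of $H'$ \emph{between} them: in a graph this is automatic (any neighbour of a level-$i_k$ vertex lies in the deleted ball), but for edges of size three it requires the well-coloured-paths argument and property \eqref{eq1}. Your alternative --- combining per-level independent sets of a single BFS --- cannot work at all, since consecutive levels of one BFS can carry edges between them. So while your proposal points at the right ingredients, the three mechanisms that constitute the proof (extenders, exact-length control, and cross-edge elimination between extracted pieces) are all missing.
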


The main point of Theorem \ref{graphbound} is that it essentially proves that the graph bound also holds for $R(C_l^r,K_n^r)$ when $l$ is odd, thus completing the result of Theorem \ref{colgrajiang1}. We made no attempt at improving this by a polylogarithmic factor as we do not believe the exponent to be correct. We shall only sketch the proof of Theorem \ref{graphbound} as most of the ideas necessary to prove it will already have been developed to prove Theorem \ref{maintheorem}. This sketch also serves to highlight how we can beat the graph bound if $r=3$. 
  
\section{Notation and Tools}
In this section we review some basic notation and definitions related to hypergraphs. We also state a few results which we will need in order to prove Theorem \ref{maintheorem}.

For $a,b \in \N$, $[b]$ denotes the set  $\{1,2,\ldots,b\}$ and $[a,b]$ denotes the set $\{a, a+1, \ldots, b \}$.

A hypergraph $H=(A,B)$ is is a pair of finite sets $A$, $B$  such that $B$ is a set of subsets of $A$. The elements of $A$ will be referred to as the \emph{vertices} of $H$ and those of $B$ as the \emph{edges} of $H$. For a given hypergraph $H=(A,B)$ we let $V(H)$ denote the set of vertices (that is, $A$) and let $E(H)$ denote the set of edges (that is, $B$). Often when it is clear that say $u$, $v$ are vertices of $V$, we shall write $uv$ to mean the edge $\{u,v \}$. A hypergraph $H$ is said to be $r$-uniform if all the elements of $B$ have the same size $r$. We shall also call an $r$-uniform hypergraph an $r$-graph, for short. Notice that when $r=2$ we get the classical definition of a loopless graph. If $a,b$ are two integers with $a<b$ then an $[a,b]$-graph means a hypergraph whose edges each have size lying in $[a,b]$. 

Given $v \in V(H)$ the degree of $v$ in $H$, denoted by $d(v)$, is the number of edges of $H$ containing $v$. The average degree of $H$, denoted by $d$, is the quantity $(\sum_{v\in V(H)}d(v))/n$. 

Given a hypergraph $H$ and a subset $X$ of $V(H)$, a subhypergraph of $H$ is a hypergraph with vertex $X \subseteq V(H)$ and set of edges a subset of $E(H)$ made of edges contained in $X$. Given $X\subseteq V(H)$, the \emph{induced} subhypergraph of $H$ on vertex set $X$, denoted by $H[X]$, is the hypergraph $(X, \{e\in E(H):e\subseteq X \})$. A hypergraph $H$ not containing (an isomorphic copy of) a  hypergraph $F$ as a subhypergraph is said to be $F$-free.  

Given two hypergraph $H_1$ and $H_2$ we shall let $H_1 + H_2$ denote the hypergraph with vertex set $V(H_1) \cup V(H_2)$ and set of edges $E(H_1) \cup E(H_2)$. 

A hypergraph $H$ is said to be \emph{simple} if for all $e,f\in E(H)$, if $e\neq f$ then $|e\cap f| \leq 1$. Notice that loose cycles as defined in Section \ref{introduction} are simple hypergraphs, whereas $r$-uniform cliques for $r\geq 3$ and $n\geq r+1$ are not. 

A path of length $l$ is a hypergraph $P$ such that $E(P) = \{e_1,e_2,\ldots,e_l \}$ and for all $i\neq j$, if $i<j$ then $e_i\cap e_j = \emptyset$ unless $i\leq l-1$ and $j= i+1$, in which case we require $e_i \cap e_j \neq \emptyset$. We also require all the edges of the path to have size at least 2. If $P$ is a simple path of length $l$ and $a\in e_1\backslash e_2$ and $b \in e_l \backslash e_{l-1}$ then we say that $P$ joins $a$ to $b$. 

A hypergraph $H$ is said to be \emph{$k$-vertex-colourable} (or \emph{$k$-colourable} for brevity) if there exists a map $c:V(H)\longrightarrow [k]$ such that for any $e\in E(H)$ there exist $a,b \in e$ such that $c(a) \neq c(b)$. This condition on $c$ makes it a \emph{proper} colouring: we remark this because later we shall also refer to any map $f:V(H)\longrightarrow [k]$ as a colouring. Suppose $V(H)$ is totally ordered by some ordering $<$. A path $P = \{e_1,e_2,\ldots, e_l\}$ of $H$ is said to be increasing if $a \leq b$ whenever $a\in e_i$ and $b\in e_j$ for some $i$, $j$ with $i<j$. Pluh\'ar \cite{pluhar} proved the following.

\begin{proposition}[Pluh\'ar \cite{pluhar}]
\label{pathcolouring}
A hypergraph $H$ is $k$-colourable if and only if there exists a total ordering $<$ of $V(H)$ for which there is no increasing simple path of length $k$ in $H$. 
\end{proposition}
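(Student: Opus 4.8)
The plan is to prove the two implications separately, in each case reading the colouring off the ordering of $V(H)$ (or vice versa) and tracking how colours are forced to change as one walks along an increasing simple path. Throughout I use that, by definition, a path of \emph{length} $k$ has exactly $k$ edges and that every edge has size at least $2$.

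For the forward direction, suppose $H$ admits a proper colouring $c : V(H) \to [k]$. I would define the ordering $<$ by grouping the vertices into blocks according to colour: list first all vertices of colour $1$, then all of colour $2$, and so on, ordering each block arbitrarily. The point of this choice is that $c$ becomes monotone, i.e.\ $u < w$ implies $c(u) \le c(w)$. Now take any increasing simple path $e_1, \ldots, e_m$. Since the path is increasing, every vertex of $e_i$ precedes every vertex of $e_{i+1}$, so $c(\max e_i) \le c(\min e_{i+1})$; and since each $e_i$ has size at least $2$ and carries two distinct colours, monotonicity forces $c(\min e_i) < c(\max e_i)$. Chaining these inequalities exhibits at least $m$ strict increases of colour along a sequence of values all lying in $[k]$, which forces $m \le k-1$. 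Hence no increasing simple path of length $k$ exists.

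For the reverse direction, suppose $<$ is an ordering with no increasing simple path of length $k$, so that every increasing simple path has length at most $k-1$. For each $v \in V(H)$ I would set $c(v)$ to be the length of the longest increasing simple path whose maximum vertex under $<$ is $v$, counting the trivial one-vertex path of length $0$; this takes values in $\{0,1,\ldots,k-1\}$, i.e.\ $k$ colours. To see $c$ is proper, fix an edge $e$ and set $a = \min e$, $b = \max e$. Let $P = e_1, \ldots, e_{c(a)}$ realize $c(a)$, so $\max e_{c(a)} = a$. Because $P$ is increasing with top vertex $a$, all its vertices are $\le a$, while every vertex of $e$ is $\ge a$; thus $e \cap V(P) = \{a\}$, and in particular $e \neq e_i$ for all $i$. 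Appending $e$ then yields an increasing simple path $e_1, \ldots, e_{c(a)}, e$ with maximum vertex $b$, so $c(b) \ge c(a)+1 > c(a)$ and $e$ sees two colours.

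The step I expect to require the most care is verifying, in the reverse direction, that appending $e$ to $P$ genuinely produces a \emph{simple} path: I must confirm that $e$ is disjoint from $e_1, \ldots, e_{c(a)-1}$ and meets $e_{c(a)}$ in exactly the single vertex $a = \max e_{c(a)} = \min e$. Both facts follow from the observation $V(P) \cap e = \{a\}$, which rests on the increasing property pinning all of $V(P)$ at or below $a$ and all of $e$ at or above it, while the size-at-least-$2$ hypothesis is what guarantees $a \neq b$ and hence that the colour strictly increases across $e$. Once this bookkeeping is in place both implications close, the only global input being that the absence of a length-$k$ increasing simple path caps all path lengths at $k-1$, exactly matching the $k$ available colours.
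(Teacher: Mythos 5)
The paper offers no proof of this proposition at all: it is imported verbatim as a known result of Pluh\'ar \cite{pluhar}, so there is no in-paper argument to compare yours against, and I can only assess your proof on its own terms. It is essentially correct, and it is the standard argument: a colour-block ordering makes the colouring monotone and forces at least $m$ strict colour increases along an increasing path of length $m$ (so $m\leq k-1$), while the reverse direction is a Gallai--Roy-type argument colouring each vertex $v$ by the length of the longest increasing simple path whose maximum vertex is $v$. The one place where your justification is thinner than it should be is exactly the step you flag: the identity $V(P)\cap e=\{a\}$ shows that $e$ meets each $e_i$ in at most the single vertex $a$, but by itself it shows neither that $e$ is \emph{disjoint} from $e_1,\ldots,e_{c(a)-1}$ nor that $e\cap e_{c(a)}$ is nonempty; for both you need that the maximum vertex $a$ of the increasing path $P$ lies in the last edge $e_{c(a)}$ and in no earlier edge. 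That is a one-line addition: if $a\in e_i$ with $i<c(a)$, the increasing property gives $a\leq y$ for every $y\in e_{c(a)}$, while maximality of $a$ in $V(P)$ gives $y\leq a$, so $e_{c(a)}=\{a\}$, contradicting that edges have size at least $2$; since $a\in V(P)$, it follows that $a\in e_{c(a)}$ and nowhere else in $P$. With that patch the appended sequence is genuinely an increasing simple path of length $c(a)+1$ with maximum vertex $b$, giving $c(b)>c(a)$, and both implications close. Note also that, as you implicitly assume, the proposition requires every edge of $H$ to have size at least $2$ (otherwise the reverse implication fails for an edge of size $1$); this is harmless in context, since the paper only applies the result to hypergraphs such as $\red$, which is a $[2,3]$-graph.
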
 
Thus, in particular, if $H$ contains no simple path of length $k$ then $H$ is $k$-colourable. 

A set $X \subseteq V(H)$ is said to be \emph{independent} in $H$ if no edge of $H$ is contained in $X$. The independence number of $H$, denoted by $\alpha(H)$, is the maximal size of an independent subset of $V(H)$. An easy observation is that if $H$ is $k$-colourable then $\alpha(H) \geq |V(H)|/k$. The following well-known result of Spencer \cite{spencer} gives another way of bounding the independence number of a hypergraph. It is an analogue of Tur\'an's Theorem \cite{turan} for $r\geq 3$. 

\begin{proposition}[Spencer \cite{spencer}]
\label{lowdegreebound}
Let $H$ be an $r$-uniform hypergraph with average degree $d$. Then $\alpha(H) \geq 0.5n/d^{\frac{1}{r-1}}$. 
\end{proposition}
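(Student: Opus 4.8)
The plan is to use the probabilistic deletion method, which is the standard route to Turán-type bounds for hypergraphs. First I would record the relationship between the average degree and the number of edges: since each edge of the $r$-uniform hypergraph $H$ contributes exactly $r$ to the degree sum, we have $\sum_{v}d(v) = r|E(H)|$, and hence $|E(H)| = dn/r$, where $n = |V(H)|$.

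Next I would introduce a random sample. Fix a parameter $p \in [0,1]$ and form a random set $S \subseteq V(H)$ by including each vertex independently with probability $p$. Let $X = |S|$ and let $Y$ denote the number of edges of $H$ lying entirely within $S$. By linearity of expectation, $\mathbb{E}[X] = pn$, and since an edge survives into $H[S]$ precisely when all $r$ of its vertices are chosen, $\mathbb{E}[Y] = p^r |E(H)| = p^r dn/r$.

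The key observation is then a deletion step: from $S$, remove one vertex from each edge contained in $S$. The resulting set is independent and has size at least $X - Y$. Therefore $\alpha(H) \geq \mathbb{E}[X - Y] = pn - p^r dn/r$ for every admissible $p$, and it remains to optimize the right-hand side. Differentiating shows that $p = d^{-1/(r-1)}$ is the optimal choice; substituting and using $p^{r-1}d = 1$ to simplify $p^r dn/r = pn/r$ gives $\alpha(H) \geq pn(1 - 1/r) = \frac{r-1}{r}\, n d^{-1/(r-1)}$. Since $\frac{r-1}{r} \geq \frac12$ for all $r \geq 2$, this yields the claimed bound $\alpha(H) \geq 0.5\, n/d^{1/(r-1)}$.

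The only point requiring care—and the main (minor) obstacle—is that the sampling probability must satisfy $p \leq 1$, i.e. that $d^{-1/(r-1)} \leq 1$; this holds exactly in the regime $d \geq 1$, which is the relevant one (for $d < 1$ the asserted lower bound can exceed $n$, so the statement should be read with $d \geq 1$ in mind). Everything else is routine: the argument is entirely a first-moment computation, requiring no concentration or second-moment input.
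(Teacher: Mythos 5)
The paper gives no proof of this proposition at all---it is quoted as a known result of Spencer---so the only comparison available is with the classical argument, and your proposal is exactly that: the standard first-moment deletion proof (sample each vertex with probability $p=d^{-1/(r-1)}$, delete one vertex from each surviving edge, optimize), carried out correctly. Your caveat about $p\leq 1$ is also accurate: the statement as written genuinely requires $d\geq 1$ (for $r=2$, a perfect matching on half the vertices has $d=1/2$ but $\alpha = 3n/4 < n \leq 0.5n/d$), so the proposition must be read with that implicit hypothesis. It is worth noting that what the paper actually uses is the monotone form ``$d\leq D$ implies $\alpha(H)\geq 0.5n/D^{1/(r-1)}$'' for a bound $D\geq 1$ that is a power of $n$, and this follows from your computation verbatim by choosing $p=D^{-1/(r-1)}$ instead, which sidesteps the small-$d$ issue entirely.
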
 

The reason why we are interested in bounding the independence number of a hypergraph is that bounding $R(C_l^3,K_n^3)$ from above is clearly equivalent to bounding the independence number of $C_l^3$-free 3-graphs from below. 

\section{Light and Heavy Pairs}
Throughout this section and the next ones, $l$ denotes a fixed integer which is at least 3.

In this section we introduce the concept of light and heavy pairs of vertices in a 3-graph $H$.  These first appeared in the proof of the upper bound of Theorem \ref{mubaykosver1} in \cite{mubaykosver}. We generalize the ideas found there as they will be equally useful when looking at $C_l^3$-free 3-graphs.  
\begin{definition}
\label{lightheavy} Let $H$ be a 3-graph. A pair $ab$ of vertices of $H$ is called \emph{light} in $H$ if it is contained in less than $2l-2$ edges of $H$. It is called \emph{heavy} in $H$ otherwise. We let $\pl(H)$ be the set of all the pairs which are light in $H$ and $\ph(H)$ the set of all the pairs which are heavy in $H$. 
\end{definition}
\begin{definition}
Let $H$ be a 3-graph. An edge $e$ of $H$ is called \emph{heavy} if it contains a pair which is heavy in $H$. Otherwise, it is called \emph{light}. We let $\eh(H)$ be the set of heavy edges of $H$ and $\el(H)$ the set of light ones. 
\end{definition}

\begin{definition}
\label{reduced}
Let $H$ be a 3-graph. The \emph{reduced} hypergraph of $H$, denoted by $\red$, is the hypergraph with vertex set $V(H)$ and set of edges $\el(H)\cup \ph(H)$.
\end{definition}
Observe that $\red$ is a $[2,3]$-graph. 
\begin{lemma}
\label{nocl}
Let $H$ be a 3-graph. If $H + \red$  contains a $C_l$ then $H$ contains a $C^3_l$. 
\end{lemma}
\begin{proof}
Let $C$ be a $C_l$ in $H + \red$. Let $A$ be the set of edges of $C$ which belong to $E(H)$ and let $B$ be the set of edges of $H$ which belong to $\ph(H)$. If $B = \emptyset$ then $C$ is already a $C_l^3$ in $H$ and there is nothing to prove. If $B\neq \emptyset$ then let $uv\in B$. We have $|V(C)\backslash \{u,v\}| = l + |A| -2\leq 2l -3 $. Therefore since $uv $ is contained in $2l-2$ edges of $H$ at least one of them, call it $e$, doesn't meet $V(C) \backslash \{u,v \}$. Then if we remove $uv$ from $C$ and add $e$ we obtain a $C_l$ in $H+\red$ with one less edge lying in $\ph(H)$. Repeat this operation until $B = \emptyset$. 
\end{proof}

\begin{lemma}
\label{lightpairsdecomposition}
Let $H$ be a $C_l^3$-free 3-graph. Then there exist $l-2$ hypergraphs $H_1$, $H_2$,\ldots, $H_{l-2}$ with vertex set $V(H)$ such that $E(H)=\bigcup_{i=1}^{l-2}E(H_i)$ and for each $i\in [l-2]$, each edge of $H_i$ contains a light pair in $H_i$.  
\end{lemma}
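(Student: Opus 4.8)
The plan is to build the decomposition by an iterative peeling process and to control the number of peeling rounds using the $C_l^3$-freeness of $H$. Define $F_0 = E(H)$ and, having defined $F_j$, let $G_j = (V(H), F_j)$; let $L_j \subseteq F_j$ be the set of edges of $G_j$ that contain a pair which is light in $G_j$ (that is, a pair lying in fewer than $2l-2$ edges of $G_j$), and set $F_{j+1} = F_j \setminus L_j$. Take $H_{j+1}$ to be the hypergraph with vertex set $V(H)$ and edge set $L_j$. The crucial observation is that this construction automatically gives the required local property: if $e \in L_j$ is witnessed by a light pair $p \subseteq e$ lying in fewer than $2l-2$ edges of $G_j$, then since $L_j \subseteq F_j$ the number of edges of $H_{j+1}$ containing $p$ is at most the number containing it in $G_j$, hence still less than $2l-2$. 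Thus $p$ is light in $H_{j+1}$, and every edge of $H_{j+1}$ contains a light pair in $H_{j+1}$. Since the sets $L_j$ partition $E(H)$, it remains only to show that the process empties $H$ within $l-2$ rounds, i.e. $F_{l-2} = \emptyset$; if it does, we obtain at most $l-2$ nonempty parts and pad with empty hypergraphs to reach exactly $l-2$.

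I would prove the round bound in contrapositive form: if $F_{l-2} \neq \emptyset$ then $H$ contains a $C_l^3$. The key point is that an edge surviving many rounds carries a lot of heaviness. Indeed, if $e \in F_{j+1}$ then $e$ was not removed in any of rounds $0, \ldots, j$, so in each $G_i$ with $i \le j$ all three pairs of $e$ are heavy in $G_i$. I would exploit this through the reduced hypergraph $\red$ and Lemma \ref{nocl}: rather than constructing a $C_l^3$ directly, I would construct a loose cycle $C_l$ of length $l$ in $H + \red$ (whose edges may be genuine triples of $H$ or heavy pairs recorded in $\ph(H)$), and then invoke Lemma \ref{nocl} to promote it to a $C_l^3$ in $H$.

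The construction of the $C_l$ would be greedy, descending through the peeling layers: starting from an edge of $F_{l-2}$, I would grow a loose path one edge at a time, at each step passing to a strictly lower layer and using the heaviness guaranteed there to extend. The extension step is exactly the counting already used in the proof of Lemma \ref{nocl}: a heavy pair lies in at least $2l-2$ edges, while a partial loose cycle on at most $l$ edges occupies at most $2l-3$ other vertices, so there is always a heavy pair or edge through the current endpoint whose remaining vertices avoid everything used so far, keeping the path simple. After $l-1$ descents I would have a loose path of length $l-1$ whose two end pairs are heavy, and a final application of the same counting closes it into a $C_l$ in $H + \red$.

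I expect the main obstacle to be the bookkeeping of this greedy descent: one has to arrange that exactly one new edge is produced per peeling layer (so that depth $l-1$ yields a cycle of length precisely $l$, giving the sharp count $l-2$), that consecutive edges meet in a single vertex while non-consecutive ones are disjoint, and that the heaviness witnessed at layer $i$ is the heaviness actually needed to perform the $i$-th extension. Making the correspondence between peeling depth and loose-path length exact, and handling the closing step so that the final pair is available as an edge of $\red$, is the delicate part; the local property of each $H_i$ and the fact that the parts partition $E(H)$ are immediate from the peeling, as noted above.
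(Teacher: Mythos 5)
Your setup coincides exactly with the paper's: the peeling process, the observation that lightness in $G_j$ is inherited by $H_{j+1}$, the reduction to showing $F_{l-2}=\emptyset$, and the plan of building a $C_l$ in $H+\red$ and invoking Lemma \ref{nocl} are all the same. But the part you defer as ``delicate bookkeeping'' is the actual heart of the proof, and the plan you sketch for it fails on two concrete counts. First, the descent budget is wrong. An edge $e_0\in F_{l-2}$ has its three pairs heavy only in $G_{l-3}$ (nothing is known about $F_{l-1}$), and each descent step (a pair heavy in $G_j$ lies in at least $2l-2$ witness triples of $F_j$, whose pairs are heavy in $G_{j-1}$) requires $j\geq 1$. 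So at most $l-3$ descents are available, not $l-1$ as you claim: a path grown one new edge per descent, seeded by a single pair (or by the triple $e_0$ itself), reaches at most $l-2$ pair-edges, and closing it yields a loose cycle of length at most $l-1$ --- one edge short, and $C_{l-1}^3$-freeness is not assumed, so no contradiction results. Second, the closing step is unsupported: extending a path at its endpoint, via a witness triple for the \emph{last} pair, never establishes heaviness of the pair formed by the two \emph{ends} of the path, so when the path reaches full length there is no counting argument left to close it. Heaviness of the closing pair has to be reserved from the very start.

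The paper fixes both problems with one idea absent from your proposal: never work with an open path. Start with the closed triangle formed by the three heavy pairs $xy$, $yz$, $zx$ of $e_0=xyz\in F_{l-2}$ (all heavy in $G_{l-3}$), and at each step expand a single $2$-edge $ab$ of the current cycle into the two $2$-edges $aw$, $wb$ through a new vertex $w$, where $abw\in F_j$ is a witness for the heaviness of $ab$ in $G_j$ chosen to avoid the rest of the cycle (possible, since the cycle has at most $l-1$ vertices while the at least $2l-2$ witnesses have pairwise distinct third vertices). The two new pairs are heavy in $G_{j-1}$, and the old cycle pairs stay heavy there because $F_{j-1}\supseteq F_j$, so the entire cycle descends one layer per expansion. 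This makes the count exact --- $3$ initial edges plus $l-3$ expansions give precisely $l$, recovering exactly the deficit of two in your count, because $e_0$ now contributes three cycle edges rather than one --- and it eliminates the closing step altogether, since the cycle is closed throughout. After the last expansion all $l$ pairs are heavy in $G_0=H$, i.e.\ they form a $C_l$ with edges in $\ph(H)\subseteq E(\red)$, and Lemma \ref{nocl} gives the contradiction. Note finally that a witness triple can never itself serve as a cycle edge adjacent to the pair it witnesses (they share two vertices), so the cycle must be assembled from heavy pairs only; your ``genuine triples of $H$ or heavy pairs'' formulation glosses over this as well.
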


\begin{proof}
Suppose to the contrary that there exists a $C_l^3$-free 3-graph $H$ for which the conclusion of the lemma does not hold. Let $J_0 = H$ and let $G_0$ be the 2-graph with vertex set $V(H)$ and set of edges $\ph(H)$. For $i\in [l-2]$, let $H_i$ be the hypergraph with vertex set $V(H)$ and edges those elements of $E(J_{i-1})$ containing an element of $\pl(J_{i-1})$. Let $J_i$ be the hypergraph with vertex set $V(H)$ and edges those elements of $E(J_{i-1})$ such that the three pairs of vertices that they contain all belong to $E(G_{i-1})$. Finally, let $G_i$ be the 2-graph with vertex set $V(H)$ and set of edges $\ph(H_i)$.

We shall now prove by induction on $k$ the claim that for $k\in [0,l-3]$, $G_{l-3-k}$ contains a $C^2_{k+3}$. First consider the base case $k=0$. By our assumption on $H$, $E(H)\neq \bigcup_{i=1}^{l-2}E(H_i)$ and therefore $E(J_{l-2})\neq \emptyset$ since $E(H) =  \bigcup_{i=1}^{l-2}E(H_i)\cup E(J_{l-2})$. Let $x_1x_2x_3 \in E(J_{l-2})$. By definition of $J_{l-2}$, the pairs $x_1x_2$, $x_2x_3$ and $x_3x_1$ are heavy in $J_{l-3}$ so that $G_{l-3}$ contains the $C^2_3$ formed by these three pairs. Suppose now that the induction hypothesis holds for some $k\in [0,l-4]$ and let us prove it holds for $k+1$. Let $C = x_1x_2\ldots x_{k+3}$ be a $C^2_{k+3}$ in $G_{l-3-k}$. Since the pair $x_{k+3}x_1$ is heavy in $J_{l-3-k}$, it is contained in at least $2l-2$ edges of $J_{l-3-k}$. One of these therefore does not meet any vertex of $C$ other than $x_{k+3}$ and $x_1$ since $|C| \leq l-1$; let $x_{k+3}x_1y$ be such an edge. The pairs $x_{k+3}y$ and $yx_1$ are then heavy in $J_{l-3-(k+1)}$ by definition of $J_{l-3-k}$. It is also clear that the pairs $x_1x_2$, $x_2x_3$, \ldots, $x_{k+2}x_{k+3}$, being heavy in $J_{l-3-k}$, are also heavy in $J_{l-3-(k+1)}$. Thus $x_1x_2\cdots x_{k+3}y$ is a $C^2_{k+4}$ in $G_{l-3-(k+1)}$, as required. This finishes the proof of the claim.

So there exists a $C^2_l$ in $\ph(H)$. By Lemma \ref{nocl} there exists a $C^3_l$ in $H$, which is a contradiction. 
\end{proof}

Let us remark that the quantity $l-2$ in Lemma \ref{lightpairsdecomposition} is by no means tight, but since we do not care about the constant implicit in Theorem \ref{maintheorem}, this shall be enough for our purposes. Indeed, in the rest of this paper we shall not seek to optimize the constants appearing in the various lemmas. The next lemma will play a very important role in our argument.  

\begin{lemma}
\label{lightedgesdecomposition}
Let $H$ be a 3-graph. Then $\el(H)$ is the union of at most $6l-11$ simple hypergraphs. 
\end{lemma}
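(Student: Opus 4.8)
The plan is to recast the statement as a bounded-degree graph-colouring problem. First I would build an auxiliary graph $\Gamma$ whose vertex set is $\el(H)$, joining two light edges precisely when they meet in two vertices (equivalently, when they share a pair). A proper colouring of $\Gamma$ with $k$ colours is then exactly a partition of $\el(H)$ into $k$ classes in which no two edges of a common class share a pair; and a family of $3$-edges no two of which share a pair is, by definition, a simple hypergraph. Hence it suffices to prove that $\chi(\Gamma) \leq 6l-11$.

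The heart of the argument is a maximum-degree estimate for $\Gamma$. Fix a light edge $e = \{a,b,c\}$. By definition of a light edge, each of its three pairs $ab$, $bc$, $ca$ is light in $H$, so each lies in at most $2l-3$ edges of $H$ and therefore in at most $2l-4$ edges other than $e$. The key observation is that any edge $f \neq e$ shares at most one pair with $e$: if $f$ contained two of the three pairs then $f \supseteq \{a,b,c\}$, forcing $f = e$ since both have size $3$. Consequently the neighbours of $e$ in $\Gamma$ supplied by the three pairs are counted without overlap, giving $\deg_\Gamma(e) \leq 3(2l-4) = 6l-12$.

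I would then finish by invoking the standard greedy bound $\chi(\Gamma) \leq \Delta(\Gamma) + 1 \leq 6l-11$, so that $\Gamma$ admits a proper colouring with at most $6l-11$ colours. Each colour class is a simple hypergraph by construction, and their union is $\el(H)$, which is precisely the claim. I do not anticipate a genuine obstacle here; the only point needing care is the degree count, and specifically the remark that two distinct $3$-edges meet in at most one pair, which is what prevents the three pairs of $e$ from contributing overlapping neighbourhoods. One should also double-check the off-by-one arising from the strict inequality in the definition of lightness (less than $2l-2$ means at most $2l-3$), since this is exactly what makes the bound $6l-11$ rather than something larger.
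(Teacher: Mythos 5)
Your proof is correct and follows essentially the same route as the paper: build the auxiliary graph on $\el(H)$ with adjacency given by $|e\cap f|=2$, bound its maximum degree by $6l-12$ using lightness of the three pairs of each light edge, and greedily colour with $6l-11$ colours so that each colour class is simple. The only cosmetic difference is that you count the degree directly ($3(2l-4)$, with the harmless extra remark about non-overlapping neighbourhoods) while the paper argues by pigeonhole that degree $\geq 6l-11$ would force a heavy pair; these are the same estimate.
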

\begin{proof}
Let $G$ be the 2-graph with vertex set $\el(H)$ and where $e$ is joined to $f$ by an edge if $|e \cap f|=2$. Then this graph has maximum degree at most $6l-12$ (for if $e\in \el(H)$ has degree at least $6l-11$ in this hypergraph then one of the 3 pairs of vertices contained in $e$ is contained in at least $2l-3$ edges of $H$ other than $e$ and hence is a heavy pair in $H$, a contradiction). Therefore there exists a proper vertex colouring of $G$ on $6l-11$ colours; now clearly each colour class of this colouring forms a simple hypergraph with vertex set $V(H)$.  
\end{proof}

The idea of Lemma \ref{lightedgesdecomposition} essentially appears in the proof of Lemma 7.7 of \cite{colgrajiang}. Without delving into the details, let us point out that the difference here is that the authors of \cite{colgrajiang} were only seeking a large simple subhypergraph of some carefully chosen $C_l$-free hypergraph, whereas in our case the fact that each edge of $\el(H)$ is contained in one of the simple hypergraphs given by Lemma \ref{lightedgesdecomposition} is vital.

\section{Extenders}

\label{extendersection}

In a $C_l^2$-free $2$-graph, the neighbourhood of a vertex $v$ contains no path of length $l-2$, hence is $(l-2)$-colourable by Proposition \ref{pathcolouring}, and so contains an independent set of size at least $|\Gamma(v)|/(l-2)$. Furthermore, there is always a vertex whose neighbourhood is at least as large as the average degree $d$ of the graph. Thus an elementary argument to find a large independent set in a $C_l^2$-free 2-graph is, provided $d$ is large, to find it as a subset of a neighbourhood of a vertex of large degree and, if $d$ is small, to apply Tur\'an's Theorem \cite{turan} which guarantees the existence of an independent set of size at least $n/(1+d)$ where $n$ is the number of vertices of the 2-graph in question. For 3-graphs, the situation is a bit more complicated. Extenders, which are introduced in Definition \ref{extender} below, will play the same role as the neighbourhood of a vertex in the argument we just gave. The various lemmas of this section are aimed at proving that extenders satisfy all the properties that are required to make the argument work, and we shall use some ideas from \cite{mubaykosver}. 

\begin{definition}
\label{extender}
A pair $(X,Y)$ of disjoint subsets of $V(H)$ is called an \emph{extender} if 
\begin{enumerate}
\item
For any $u,v \in X$ with $u\neq v$ and any set $S\subseteq V(H)\backslash (\{u,v \} \cup Y)$ of size at most $2l-5$ there exists a simple path of length two in $H$ joining $u$ to $v$ and which contains no element of $S$;
\item
$|Y| \leq 2|X|$.
\end{enumerate}
The size of the extender $(X,Y)$ is defined to be $|X|$. 
\end{definition}

Thus an extender is a generalization of the neighbourhood of a vertex in a 2-graph in the sense that, if $v$ is a vertex of a 2-graph $G$, then by letting $X = \Gamma(v)$ and $Y = \{ v\}$ we see that $(X,Y)$ certainly satisfies the requirement of being an extender. The difference for a 3-graph is that $Y$ will typically have size much larger than one and also that proving that large extenders exist is not as straighforward as in the 2-graph case. The following is an easy corollary to Lemma \ref{nocl}.

\begin{lemma}
\label{nolpath}
Let $H$ be a $C_l^3$-free 3-graph and let $(X,Y)$ be an extender in $H$. Then for any $u,v\in X$ with $u\neq v$, there does not exist a simple path of length $l-2$ in $(H + \red )[V(H)\backslash Y]$ joining $u$ to $v$. 
\end{lemma}

\begin{proof}
If there were such a path $P$, then if we let $S = V(P)\backslash \{ u,v\}$, clearly $|S| \leq 2l-5$ and so there exists a simple path of length two joining $u$ to $v$ and containing no other vertex of $P$; thus it forms a $C_l$ in $H + \red$. But then by Lemma \ref{nocl} $H$ contains a $C_l^3$, a contradiction.  
\end{proof}

The next lemma proves that in a $C_l^3$-free 3-graph $H$ there exists an extender of size at least a constant times the average degree of $H$. 

\begin{lemma}
\label{existence}
Let $H$ be a $C_l^3$-free $3$-graph of average degree $d$. Then there exists an extender $(X,Y)$ such that $|X| \geq d/(24l^2)$.   
\end{lemma}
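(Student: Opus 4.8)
The plan is to realise an extender from a single \emph{loose star}: a collection of edges through a common vertex that pairwise meet only in that vertex. Concretely, suppose I can find a vertex $w$ together with $k := \lceil d/(24l^2)\rceil$ edges $\{w,a_i,b_i\}$ ($1 \le i \le k$) of $H$ with $w,a_1,b_1,\ldots,a_k,b_k$ all distinct. I would then set $X = \{a_1,\ldots,a_k\}$ and $Y = \{w,b_1,\ldots,b_k\}$, so that $X,Y$ are disjoint and $|Y| = k+1 \le 2|X|$. For any $a_i \ne a_j$ in $X$ the two edges $\{a_i,b_i,w\}$ and $\{w,b_j,a_j\}$ form a simple path of length two joining $a_i$ to $a_j$ whose interior vertices $b_i,w,b_j$ all lie in $Y$; since any admissible $S$ is disjoint from $Y$, this path avoids $S$, so condition (1) of Definition \ref{extender} holds for \emph{every} $S$, not merely those of size at most $2l-5$. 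Thus the whole problem reduces to producing a loose star of size $k$, equivalently a vertex $w$ whose link $L_w$ has matching number at least $k$. When $d < 24l^2$ the bound is vacuous (a single vertex, with $Y=\emptyset$, is already an extender), so I may assume $d$ is large.

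The easy source of a large loose star is a \emph{simple} subhypergraph, where every pair has codegree at most one and hence every link is \emph{itself} a matching of size equal to the degree of its centre. By Lemma \ref{lightedgesdecomposition}, $\el(H)$ is the union of at most $6l-11$ simple hypergraphs, so if the light edges form a constant fraction of $E(H)$ then one of these simple pieces $S$ has average degree $\Omega(d/l)$; a maximum-degree vertex of $S$ then supplies a loose star of size $\Omega(d/l)$ in $H$, which comfortably exceeds $k$. More generally, since a graph with $e$ edges and maximum degree $\Delta$ has a matching of size at least $e/(2\Delta-1)$, any vertex of degree at least $d$ all of whose pairs have codegree at most $12l^2$ already yields a loose star of the required size; so the only way to fail is to have genuine codegree concentration.

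The step I expect to be the main obstacle is precisely the case in which the heavy edges dominate, since Lemma \ref{lightedgesdecomposition} controls only $\el(H)$. Super-heavy pairs (``books'') are genuinely problematic: a pair $\{w,p\}$ lying in many edges $\{w,p,z\}$ contributes a large codegree but no loose star, because any two of its edges meet in the \emph{two} vertices $w,p$ rather than in one, and a hypergraph assembled from many disjoint books can have large average degree while no single link has a large matching. Here $C_l^3$-freeness must do the work. The key leverage is that, by Lemma \ref{nocl}, a graph cycle of length $l$ sitting inside $\ph(H) \subseteq \red$ would lift to a $C_l^3$ in $H$; hence the heavy-pair graph $\ph(H)$ contains no cycle of length $l$. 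Combining this with a Helly-type analysis of the heavy pairs meeting at a reused leaf (pairwise-intersecting pairs of a graph form a star or a triangle, so a vertex that is a leaf of many books with pairwise-disjoint cores already gives a loose star), I would argue that a vertex of high degree must either centre many books with distinct partners—yielding a loose star after one leaf is picked from each—or force a long cycle of heavy pairs, extended exactly as in the proof of Lemma \ref{lightpairsdecomposition}, contradicting $C_l^3$-freeness. It is in balancing these alternatives that the robustness parameter $2l-5$ and the quadratic dependence $l^2$ (rather than the linear $l$ of the simple case) should enter.
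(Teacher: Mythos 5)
Your first moves are sound: a loose star does yield an extender (with connecting paths inside $Y$, so condition (1) of Definition \ref{extender} holds for every $S$), and when $\el(H)$ contains a constant fraction of $E(H)$, Lemma \ref{lightedgesdecomposition} plus a maximum-degree vertex of the densest simple piece gives a loose star of size $\Omega(d/l)$. But the case you flag as ``the main obstacle'' is not a technicality awaiting completion: the reduction to loose stars is irreparably lossy, because a $C_l^3$-free $3$-graph of average degree $d$ need not contain \emph{any} loose star of size larger than $O(\sqrt{d})$, let alone $d/(24l^2)$. Concretely, let $\mathcal{C}$ be a graph on $U\cup Z$, $|U|=|Z|=n$, such that $G_0:=\mathcal{C}[U]$ is $D$-regular, each vertex of $Z$ has exactly $D$ neighbours, all in $U$, each vertex of $U$ has exactly $D$ neighbours in $Z$, and $\mathcal{C}$ has girth greater than $2l$; standard constructions give such $\mathcal{C}$ with $D$ a small power of $n$. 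Let $H$ have vertex set $U\cup Z$ and edges $\{w,v,z\}$ whenever $wv\in E(G_0)$, $z\in Z$ and $wz\in E(\mathcal{C})$. Since the girth kills triangles, each edge arises from exactly one incidence, so $|E(H)|=nD^2$ and $d=\tfrac32 D^2$. Any loose cycle of length $j\le l$ in $H$ yields a closed trail of length at most $2j$ in $\mathcal{C}$ (replace each edge $\{w,v,z\}$ by the appropriate sub-walk of the path $v$--$w$--$z$; no $\mathcal{C}$-edge repeats because two edges of a loose cycle share at most one vertex), hence a cycle of length at most $2l$ in $\mathcal{C}$, which is impossible; so $H$ is $C_l^3$-free. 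Yet the link of any $u\in U$ is covered by $N_{G_0}(u)$ and the link of any $z\in Z$ is covered by $N_{\mathcal{C}}(z)$, so no link has a matching larger than $D$: the largest loose star in $H$ has size at most $D=O(\sqrt{d})$, which is below $d/(24l^2)$ as soon as $D>16l^2$.

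The same example shows both arms of your concluding dichotomy fail quantitatively and qualitatively: every $u\in U$ does ``centre many books with distinct partners'' (all pairs $\{u,w\}$, $w\in N_{G_0}(u)$, and $\{u,z\}$, $z\in N_{\mathcal{C}}(u)\cap Z$, are heavy), but only $O(\sqrt d)$ of them, each of codegree $O(\sqrt d)$; and the heavy pairs form exactly $\mathcal{C}$, whose girth exceeds $2l$, so there is no short cycle of heavy pairs anywhere, while heavy-pair cycles \emph{longer} than $l$ are harmless --- Lemma \ref{nocl} and the iteration in Lemma \ref{lightpairsdecomposition} grow short heavy cycles up to length exactly $l$, they cannot shrink long ones. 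The paper's proof succeeds precisely because its extender need not be a loose star. It decomposes $E(H)$ by Lemma \ref{lightpairsdecomposition}, and in the codegree-concentrated case (its Case 1) takes edges $vx_iy_i$ with $vx_i$ light and $vy_i$ heavy, allowing the $y_i$ to repeat, so that the chosen edges may pairwise meet in \emph{two} vertices; condition (1) of Definition \ref{extender} is then verified not by routing through $Y$ but by using the heaviness of the pair $x_jy_j$ to replace $v$ by a fresh third vertex $z\notin S\cup\{v,x_i\}$, giving the path $\{vx_iy_i,\,y_ix_jz\}$. This is exactly where the bound $|S|\le 2l-5$ enters: the slack in the definition that your construction deliberately never uses (paths may leave $Y$ as long as they avoid the given small $S$) is what makes the lemma true at linear-in-$d$ strength. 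Indeed, in the example above this rescue produces an extender of size $\Theta(d)$: fix $v\in U$, take $X$ to be the $D^2$ pages appearing with $v$ in its non-centre role and $Y=\{v\}\cup N_{G_0}(v)$, and connect two pages sharing the same centre $w$ via a heavy pair $\{w,z'\}$ exactly as above.
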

\begin{proof}
Let $H_1$, $H_2$, \ldots, $H_{l-2}$ be hypergraphs such that $E(H) = \bigcup_{i=1}^{l-2}E(H_i)$ and for each $i$, each edge of $H_i$ contains a pair that is light in $H_i$. Such a collection of hypergraphs exists by Lemma \ref{lightedgesdecomposition}. By the pigeonhole principle there exists $i\in[l-2]$ such that $|E(H_i)|\geq |E(H)|/(l-2)$. We may assume without loss of generality that $i=1$. For $j=1,2,3$ let $H_{1j}$ be the hypergraph with vertex set $V(H)$ and consisting of those edges of $H_1$ containing precisely $j$ light pairs in $H_1$. We consider two different cases. 
\begin{description}[leftmargin=*]
\item[Case 1: $|E(H_{11})|\geq |E(H_1)|/2$.] 

Each edge of $H_{11}$ gives two pairs $(x,e)$ of a vertex $v$ and an edge $e$ of $H_1$ such that $v$ is contained in the (unique) light pair of $H_1$ contained in $e$. Therefore there exists a vertex $v$ contained in the light pair of at least $2|E(H_{11})|/n$  edges of $H_{11}$. But $2|E(H_{11})|/n \geq |E(H_1)|/n \geq E(H)/(n(l-2)) = d/(3(l-2))$. List these edges as $vx_iy_i$: $i=1,\ldots, m$, $m \geq d/(3(l-2))$, so that the pairs $vx_i$ are light and the pairs $vy_i$ are heavy in $H_1$. In particular, no $y_i$ can occur as $x_j$ for any $j$. As the pairs $vx_i$ are light, at least $m/(2l-3)$ of the $x_i$'s are pairwise distinct; so we let $X$ be a set of $m/(2l-3)$ pairwise distinct $x_i$'s and let $Y$ be $\{v\} \cup \{y_i:x_i \in X\} $. We claim that the pair $(X,Y)$ is an extender. It is clear that $|Y| \leq 2|X|$, so let us check that the first condition holds. Let $x_i,x_j\in X$. Let $S\subseteq V(H)\backslash (\{x_i,x_j \} \cup Y)$, $|S| \leq 2l-5$. If $y_i \neq y_j$ then $\{vx_iy_i, vx_jy_j\}$ forms a required path of length two. If $y_i = y_j$ then since $x_jy_i$ is heavy in $H$ there exists $z\in V(H)\backslash (S \cup \{v,x_i\})$ such that $y_ix_jz \in E(H)$. Then $\{vx_iy_i,y_ix_jz\}$ forms the required path of length two. 

\item[Case 2: $|E(H_{12})\cup E(H_{13})|\geq |E(H_1)|/2$.] Each edge of $E(H_{12})\cup E(H_{13})$ defines at least one pair $(v,e)$ of a vertex $v$ and an edge $e$ of $H_1$ such that $v$ belongs to two light pairs of $e$. Thus there exists a vertex $v$ contained in two light pairs of at least  $|E(H_1)|/(2n) \geq d/(6(l-2))$ edges of $H_1$. List these edges as $vx_iy_i$, $1\leq i\leq m;  m\geq d/(6(l-2))$, so that the pairs $vx_i$ and $vy_i$ are light for all $i$. The fact that these pairs are light implies that we may find at least $m/(4l-7)$ pairs $x_iy_i$ which are pairwise disjoint; without loss of generality the pairs $x_iy_i$ are pairwise disjoint for $i \in [1, \lceil m/(4l-7)\rceil]$. We let $X = \{ x_i:\, i\in [\lceil m/(4l-7)\rceil]\}$  and we let $Y = \{v \} \cup \{ y_i:\, i\in [\lceil m/(4l-7)\rceil]\}$. It is clear that $|Y| \leq 2|X|$ and for any $i\neq j$ and any $S\subseteq V(H)\backslash (\{x_i,x_j \} \cup Y)$, $ \{ vx_iy_i, vx_jy_j\} $ is a path of length two not meeting $S$ which joins $x_i$ to $x_j$.  \qedhere
\end{description}
\end{proof}

We are now ready to prove Lemma \ref{niceindependence}, which is the main result of this section. It says that if $(X,Y)$ is an extender, then $X$ contains a large independent set in $\red$. Observe that being independent in $\red$ is stronger than being independent in $H$, i.e. that any set independent in $\red$ is also independent in $H$. This is because any edge of $H$ contains an edge of $\red$. The reason why we wish to find an independent set in $\red$ rather than merely $H$ is that in the proof of Theorem \ref{maintheorem} we will seek an independent set in $H$ which is made of subsets of several disjoint extenders $(X_1,Y_1)$, $(X_2,Y_2)$,\ldots (for a suitable definition of ``disjoint"), and so we shall need not only that each subset of each extender is independent in $H$ but also that there are no edges \emph{between} these subsets. This is where the extra information given by the lemma will be useful. 

\begin{lemma}
\label{niceindependence}
Let $H$ be a $C_l^3$-free 3-graph and let $(X,Y)$ be an extender in $H$. Then $X$ contains a subset $Z$ which
\begin{enumerate}
\item
is independent in $\red$;
\item
has size at least $|X|/(l-2)$.
\end{enumerate}
\end{lemma}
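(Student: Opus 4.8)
The plan is to show that the induced reduced hypergraph $\red[X]$ is $(l-2)$-colourable and then invoke the elementary bound relating colourability to the independence number. Recall from the remark following Proposition \ref{pathcolouring} that any hypergraph containing no simple path of length $k$ is $k$-colourable, and recall the observation that a $k$-colourable hypergraph on $m$ vertices has an independent set of size at least $m/k$. Taking $k = l-2$ and applying these to $\red[X]$ would produce a set $Z \subseteq X$ independent in $\red[X]$ with $|Z| \geq |X|/(l-2)$. Since any edge of $\red$ contained in $Z \subseteq X$ is by definition an edge of the induced hypergraph $\red[X]$, a subset of $X$ independent in $\red[X]$ is independent in $\red$; so such a $Z$ would satisfy both conclusions at once.

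First I would argue that $\red[X]$ contains no simple path of length $l-2$. Suppose $P$ were such a path, with edges $e_1,\ldots,e_{l-2}$ and endpoints $a \in e_1\setminus e_2$ and $b \in e_{l-2}\setminus e_{l-3}$. All vertices of $P$ lie in $X$; in particular $a,b \in X$, and since $P$ is simple and has length $l-2 \geq 1$ its two endpoints are distinct. Because $X$ and $Y$ are disjoint we have $X \subseteq V(H)\setminus Y$, so $P$ is a simple path of length $l-2$ in $(H+\red)[V(H)\setminus Y]$ joining two distinct vertices $a,b \in X$. This directly contradicts Lemma \ref{nolpath}, so no such path exists.

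With this path-freeness established, Proposition \ref{pathcolouring} gives that $\red[X]$ is $(l-2)$-colourable, and the colourability bound then yields the desired $Z$ exactly as described in the first paragraph, completing the proof.

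The argument is short once the right object is identified, and the main point to get right is the reduction rather than any delicate estimate. The crucial observation is that a simple path living entirely inside $X$ automatically has both endpoints in $X$, which is precisely the hypothesis needed to apply Lemma \ref{nolpath}; this is what allows the extender to play the role that the neighbourhood of a vertex plays in the $2$-graph case, where that neighbourhood contains no path of length $l-2$. I would also take care to record that independence in the induced hypergraph $\red[X]$ coincides with independence in $\red$ for subsets of $X$, so that the stronger first conclusion (independence in $\red$, not merely in $H$) is genuinely obtained, as this is exactly the extra information the lemma is designed to provide for the later stitching-together of disjoint extenders.
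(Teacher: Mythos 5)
Your proof is correct and follows exactly the paper's argument: apply Lemma \ref{nolpath} to rule out simple paths of length $l-2$ in $\red[X]$, invoke Proposition \ref{pathcolouring} for $(l-2)$-colourability, and take the largest colour class. The extra details you spell out (endpoints of a path in $\red[X]$ lie in $X$, and independence in $\red[X]$ equals independence in $\red$ for subsets of $X$) are precisely what the paper leaves implicit.
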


\begin{proof}[Proof of Lemma \ref{niceindependence}]
By Lemma \ref{nolpath} $\red[X]$ contains no simple path of length $l-2$ and hence by Proposition \ref{pathcolouring} is $(l-2)$-colourable. Hence $X$ contains a set $Z$ which is independent in $\red$ and has size at least $|X|/(l-2)$.
\end{proof}

\section{Neighbourhoods of Extenders}

In a $C_l^2$-free 2-graph, the $i^{\text{th}}$  neighbourhood of a vertex $v$ (that is, the vertices at distance precisely $i$ from $v$) is also $(l-2)$-colourable provided $i\leq \lfloor (l-1)/2 \rfloor$ (see Erd\"{o}s, Faudree, Rousseau and Schelp \cite{erfaurouschelp}). The arguments of \cite{lizang} and \cite{sudakov} use this in order to bound $R(C^2_l,K_n^2)$. Likewise, the $i^{\text{th}}$ neighbourhood of an extender, if carefully defined, will be useful in the proof of Theorem \ref{maintheorem}. 
 
\begin{definition}
Let $H$ be a 3-graph and let $(X,Y)$ be an extender in $H$. Let $S\subseteq V(H)$, $X\cup Y \subseteq S $.  For $i\in \N$ and $v\in S\backslash (X \cup Y)$, the distance between $v$ and $(X,Y)$ within $S$, denoted by $d_S(v,(X,Y))$, is the minimal length of a simple path $P$ in $\red[S\backslash Y]$ joining $v$ to a vertex $x$ of $X$. 

The $i^{\text{th}}$ neighbourhoud of $(X,Y)$ within $S$, denoted by $\Gamma_{S,i}(X,Y)$, is the set $\{v\in V(H)\backslash Y:\, d_S(v,(X,Y)) = i \}$. We also let $\Gamma_{S,0}(X,Y) = X$.  Finally, we define $\Gamma_{S,\leq i}(X,Y)$ to be $\bigcup_{j\in [0,i]}\Gamma_{S,j}(X,Y)$.

\end{definition}
So the $i^{\text{th}}$ neighbourhood of $(X,Y)$ within $S$, for $i\geq 1$, consists of those vertices of $S$ which do not lie in $X$ and which can be joined to a vertex of $X$ by a path of $\red[S]$ of length $i$ which does not meet $Y$, but by no such path of length less than $i$. Notice that a key element of this definition is that we are looking at paths in $\red$, not $H$. Our motivation for introducing neighbourhoods of extenders is the following lemma, which  plays the same role for neighbourhoods of extenders as Lemma \ref{niceindependence} does for extenders. Again the stronger statement that $\Gamma_{S,i}(X,Y)$ contains a large independent set in $\red$ rather than $H$ will be necessary in the proof of Theorem \ref{maintheorem}. 

\begin{lemma}
\label{neighbourhoodindependence}
Let $H$ be a 3-uniform, $C_l^3$-free hypergraph. Let $(X,Y)$ be an extender in $H$. Let $S\subseteq V(H) $, $X\cup Y \subseteq S $. Let $i\in [m-1]$ where $m = \lfloor (l-1)/2 \rfloor$ and $m\geq 2$. Then $\Gamma_{S,i}(X,Y)$ contains a set $Z$ which
\begin{enumerate}
\item
is independent in $\red$; 

\item
is such that $|Z|\geq |\Gamma_{S,i}(X,Y)|/b_l$ where $b_l = (6l-10)^{m - 1} \cdot (2m - 1)^{2m - 1}\cdot (l-2)^{3^{2m - 1}}$.
\end{enumerate}
\end{lemma}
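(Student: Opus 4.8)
The plan is to generalize the proof of Lemma~\ref{niceindependence} to the $i$-th neighbourhood $\Gamma := \Gamma_{S,i}(X,Y)$ of an extender. The guiding principle is the $2$-graph fact cited at the start of the section: in a $C_l^2$-free graph the $i$-th neighbourhood of a vertex is $(l-2)$-colourable for $i \leq \lfloor(l-1)/2\rfloor$, because a simple path of length $l-2$ \emph{inside} the $i$-th neighbourhood can be combined with two shortest paths back to the centre to produce a cycle of length at most $l$. I would try to run exactly this argument inside $\red$: a long simple path living in $\red[\Gamma]$ should, after attaching geodesics from its two endpoints back into $X$ and then using the extender property (via Lemma~\ref{nolpath} or directly Definition~\ref{extender}), close up into a $C_l$ in $H + \red$, contradicting Lemma~\ref{nocl}. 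The length budget is exactly why the range $i \leq m-1 = \lfloor(l-1)/2\rfloor - 1$ appears: two geodesics of length $\leq i$ plus a path of some controlled length must total at most $l-2$ so that the final extender-edge completes a $C_l$.

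First I would fix a vertex $v \in \Gamma$ and a shortest path $Q_v$ in $\red[S\setminus Y]$ of length $i$ from $v$ to some $x(v) \in X$; such a geodesic exists by definition of $\Gamma_{S,i}$. The strategy is then to bound the length of the longest simple path that can occur in $\red[\Gamma]$ by stitching together geodesics for its two endpoints with the extender's connecting path of length two. The main structural difficulty, and the reason the constant $b_l$ is so large, is that these geodesics and the interior path may share vertices, may not be simple when concatenated, and the edges of $\red$ are $[2,3]$-edges rather than ordinary graph edges, so ``simple path'' and ``does not meet $S$'' conditions interact awkwardly. I expect the honest accounting of these overlaps to force a partition of $\Gamma$ into boundedly many pieces on each of which the geodesics are well-behaved and mutually vertex-disjoint in the relevant sense; the factors $(6l-10)^{m-1}$ and $(2m-1)^{2m-1}$ presumably come from choosing, for each vertex, a consistent geodesic skeleton (there are boundedly many geodesic ``types'' because each step is an $\el\cup\ph$ edge of size at most $3$), and then colouring so that two vertices with compatible skeletons never have their geodesics collide.

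Concretely, I would proceed as follows. After selecting geodesics, I would partition $\Gamma$ according to the unordered structure of the vertices lying on each geodesic outside $\Gamma$ itself (there are at most $2m-1$ such ``levels'' across the two neighbourhoods, and at each level a bounded number of choices), so that within one class the geodesics are essentially disjoint from one another except possibly at their $X$-endpoints. On each class $\Gamma'$ I would argue that $\red[\Gamma']$ contains no simple path of length $l-2$: given such a path with endpoints $u,w$, I attach $Q_u$ and $Q_w$, use the extender property to join $x(u)$ to $x(w)$ by a length-two simple path in $H$ avoiding the $\leq 2l-5$ already-used vertices, and obtain a $C_l$ in $H+\red$, contradicting Lemma~\ref{nocl} exactly as in Lemma~\ref{nolpath}. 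Then Proposition~\ref{pathcolouring} gives that $\red[\Gamma']$ is $(l-2)$-colourable, so $\Gamma'$ contains a set independent in $\red$ of size $\geq |\Gamma'|/(l-2)$; iterating this colouring over the $3^{2m-1}$-fold nested partition explains the tower exponent $(l-2)^{3^{2m-1}}$ in $b_l$. Taking the largest independent piece across all classes yields the desired $Z$ with $|Z| \geq |\Gamma|/b_l$. The hardest part will be making the partition precise enough that the geodesics genuinely avoid one another so that the attached paths remain simple and the length bookkeeping stays under $l-2$; everything after that is the now-routine colour-and-extract argument.
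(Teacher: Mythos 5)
Your high-level skeleton (fix a geodesic $Q_v$ for each $v\in\Gamma_{S,i}(X,Y)$, split into boundedly many classes, forbid long increasing paths, apply Proposition~\ref{pathcolouring}) matches the paper's, but the step that carries all the difficulty is not achievable as you state it. You propose to partition $\Gamma_{S,i}(X,Y)$ into boundedly many classes within which the geodesics are ``essentially disjoint\ldots except possibly at their $X$-endpoints''. No such partition exists: nothing prevents every vertex of $\Gamma_{S,i}(X,Y)$ from having its chosen geodesic pass through one common bottleneck vertex $z\notin X\cup Y$ (say at distance $i-1$ from $X$), and then any class containing two vertices already fails, so one would need $|\Gamma_{S,i}(X,Y)|$ classes. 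The paper does not avoid intersections, it disciplines them: after restricting to geodesics of a common type (via Lemma~\ref{lightedgesdecomposition}) it makes all geodesics rainbow with respect to a random position-colouring, so that two geodesics can meet only at equal positions, and then classifies each edge $e$ of $\red[Z_2]$ by the pattern, over positions $k$, of $<$, $=$, $>$ between $x_k^a$ and $x_k^b$, where $a,b$ are the extreme vertices of $e$. Along an increasing path whose edges all have the same pattern, either no coordinate is ``$=$'', in which case all the geodesics along the path are pairwise disjoint, or some coordinate $q$ is ``$=$'', in which case \emph{every} geodesic along the path passes through one and the same vertex $x_q^{u_0}$; in that second case the contradiction comes not from the extender property but from building a loose cycle of length exactly $l$ through the shared vertex (the tails of the two extreme geodesics beyond $x_q^{u_0}$, of length $i-h+1$ each, plus a prefix of the increasing path of length $l-2i+2h-2$), contradicting Lemma~\ref{nocl}. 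This intersecting case is the heart of the paper's proof, and your proposal has no mechanism for it.

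There is also a quantitative error in the case you do treat. If $u,w$ are the endpoints of a simple path of length $l-2$ in $\red[\Gamma']$ and you attach the full geodesics $Q_u$, $Q_w$ (length $i$ each) and then the extender's length-two path, the resulting cycle has length $(l-2)+2i+2=l+2i$, which is not a $C_l$ --- a $C_l^3$-free hypergraph may perfectly well contain longer loose cycles --- and moreover the set of vertices that the length-two path must avoid has size about $2l-5+2(2i-1)>2l-5$, so Definition~\ref{extender} does not even supply that path. The repair, which is exactly what the paper does in its disjoint case, is to use only a \emph{prefix} of the long path, of length $l-2-2i$: that prefix together with the two disjoint geodesics is a simple path of length exactly $l-2$ in $\red$, avoiding $Y$ and joining two vertices of $X$, and Lemma~\ref{nolpath} (whose $2l-5$ and cycle-length arithmetic is calibrated for paths of length exactly $l-2$) gives the contradiction. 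This length-bookkeeping slip is fixable; the missing treatment of intersecting geodesics is the genuine gap.
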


Let us point out that the parameter $S$ in the definition of the $i^{\text{th}}$ neighbourhood of an extender plays no important role in this lemma. It will only be required later on in the proof of Theorem \ref{maintheorem}. What the proof of Lemma \ref{neighbourhoodindependence} actually shows is that amongst $n$ vertices each joined to $X$ by a path in $\red$ of length $i$ not meeting $Y$, we may find $n/b_l$ vertices which form an independent set in $\red$, where $b_l$ is a constant whose value does not matter to us. 

\begin{proof}[Proof of Lemma \ref{neighbourhoodindependence}]

For the sake of clarity, the proof will contain several subclaims. 

By Lemma \ref{lightedgesdecomposition}, $\el(H)$ can be partitioned into $6l-11$ simple hypergraphs which we denote by $H_1$, $H_2$, \ldots, $H_{6l-11}$. Furthermore let $H_0 = \ph(H)$. Thus $\red$ is a $[2,3]$-graph whose set of edges is partitioned by the simple hypergraphs $H_0$, $H_1$, $H_2$,\ldots, $H_{6l-11}$. 

For each $v\in \Gamma_{S,i}(X,Y)$ let $P_v$ be a simple path of length $i$ in $\red[S\backslash Y]$ which joins $v$ to some vertex $x_v$ of $X$. For the rest of the proof, the choice of $P_v$ and $x_v$ is fixed for each $v\in \Gamma_{S,i}(X,Y)$. We also fix an enumeration of the edges of $P$ as  $E(P_v) = \{ f_1^v, f_2^v,\ldots, f_i^v \}$ with $x_v\in f_1^v$ and $v\in f_i^v$ and $f_k^v \cap f_{k+1}^v \neq \emptyset$ for all $k\in [i-1]$, as well as an enumeration of the vertices of $P$ as $V(P) = \{x_1^v,x_2^v,\ldots,x_{|V(P)|}^v \}$ so that $x_1^v = x_v$ and $x^v_{|V(P)|} = v$, and if  $x_r^v \in f^v_j$, $x_s^v \in f^v_k$ with $j < k$ then $r \leq s$. It is easy to see that both of the enumerations we just described exist and are unique, because the path $P_v$ is simple and its edges have size no more than 3.   

The \emph{type} of $P_v$ is the tuple $(t_k)_{k=1}^i$ which is such that for each $k\in [i]$, $f_k^v \in H_{t_k}$. Clearly for any $v$, $P_v$ has one of $(6l-10)^i$ possible types. Therefore we have our first claim.

\begin{claim}
\label{z1}
There exists a subset $Z_1$ of $\Gamma_{S,i}(X,Y)$ of size at least $|\Gamma_{S,i}(X,Y)|/(6l-10)^i$ such that all the paths $P_v$ for $v\in Z_1$ are of the same type.
\end{claim}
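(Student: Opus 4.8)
The plan is to prove this by a direct pigeonhole argument on the \emph{types} of the fixed paths $P_v$. First I would recall the partition of the edge set of $\red$ that was established at the opening of the proof: by Lemma \ref{lightedgesdecomposition} the light edges $\el(H)$ are covered by the $6l-11$ simple hypergraphs $H_1,\ldots,H_{6l-11}$, and together with $H_0=\ph(H)$ these furnish $6l-10$ hypergraphs whose union is exactly $E(\red)$. Consequently every edge of $\red$ lies in some $H_t$ with $t$ ranging over the $6l-10$ indices in $\{0,1,\ldots,6l-11\}$.

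Next I would check that the type is a well-defined map on $\Gamma_{S,i}(X,Y)$. Each path $P_v$ has precisely $i$ edges $f_1^v,\ldots,f_i^v$, and since $P_v$ is a path in $\red[S\backslash Y]$ every one of these edges belongs to $\red$, hence to at least one $H_t$. Fixing such a choice for each $k\in[i]$ (using that the $H_t$ partition $E(\red)$, so the choice is in fact unique) assigns to $P_v$ the tuple $(t_k)_{k=1}^i$ with $f_k^v\in H_{t_k}$. Since each coordinate $t_k$ takes one of $6l-10$ values and there are $i$ coordinates, the number of distinct possible types is at most $(6l-10)^i$.

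Finally I would invoke the pigeonhole principle. The assignment $v\mapsto \text{type}(P_v)$ sends the $|\Gamma_{S,i}(X,Y)|$ vertices of $\Gamma_{S,i}(X,Y)$ into a set of at most $(6l-10)^i$ types, so some single type is shared by at least $|\Gamma_{S,i}(X,Y)|/(6l-10)^i$ of them. Taking $Z_1$ to be the set of $v$ attaining that common type yields a subset of the required size all of whose paths $P_v$ have the same type, which is exactly the assertion of the claim.

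I do not expect any genuine obstacle here, as the argument is purely a counting one. The only point that demands a moment of care is the bookkeeping of the index range: one must remember to include $H_0=\ph(H)$ alongside the $6l-11$ simple hypergraphs supplied by Lemma \ref{lightedgesdecomposition}, so that there are $6l-10$ (and not $6l-11$) choices per coordinate and hence the stated bound $(6l-10)^i$ on the number of types.
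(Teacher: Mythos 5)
Your proposal is correct and matches the paper's own argument exactly: the paper likewise defines the type of $P_v$ via the partition of $E(\red)$ into $H_0 = \ph(H)$ and the $6l-11$ simple hypergraphs from Lemma \ref{lightedgesdecomposition}, observes there are at most $(6l-10)^i$ possible types, and applies the pigeonhole principle. No discrepancies to note.
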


Since all the paths $P_v$ for $v\in Z_1$ have the same type, it is clear that they contain the same number of vertices, and we denote by $p$ this quantity, so $p\leq 2i+1$.  Let $c:V(H)\longrightarrow [2i+1]$ be a (not necessarily proper) colouring of the vertices of $V(H)$. We say that a path $P_v$ for $v\in Z_1$ is \emph{rainbow} with respect to $c$ if $c(x_k^v) = k$ for all $k \in [p]$. Our second claim is the following.

\begin{claim}
\label{z2}
There exists a colouring $c:V(H)\longrightarrow [2i+1]$ and a subset $Z_2$ of $Z_1$ of size at least $|Z_1|/(2i+1)^{2i+1}$ such that $P_v$ is rainbow with respect to $c$ for all $v\in Z_2$. 
\end{claim}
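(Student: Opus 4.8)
The goal of Claim~\ref{z2} is to find, after restricting to a large subset of $Z_1$, a single colouring $c:V(H)\longrightarrow[2i+1]$ under which all the retained paths $P_v$ are rainbow. The plan is to produce such a colouring at random and then take an expectation argument to extract a large rainbow subfamily. Concretely, I would choose $c$ uniformly at random, i.e.\ independently assign to each vertex of $V(H)$ a colour chosen uniformly from $[2i+1]$. Fix $v\in Z_1$ and recall that $P_v$ has exactly $p\leq 2i+1$ vertices $x_1^v,\ldots,x_p^v$, which are distinct because $P_v$ is a simple path. The event that $P_v$ is rainbow is precisely the event that $c(x_k^v)=k$ for all $k\in[p]$; since these are $p$ distinct vertices and the colours are assigned independently and uniformly, this event has probability exactly $(2i+1)^{-p}\geq (2i+1)^{-(2i+1)}$.

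Having this per-path probability, I would pass to expectations. Let $N(c)$ denote the number of $v\in Z_1$ for which $P_v$ is rainbow with respect to $c$. By linearity of expectation,
\begin{equation*}
\mathbb{E}[N(c)] \;=\; \sum_{v\in Z_1}\Pr[P_v\text{ is rainbow}] \;\geq\; \frac{|Z_1|}{(2i+1)^{2i+1}}.
\end{equation*}
Therefore there exists at least one concrete colouring $c$ achieving $N(c)\geq |Z_1|/(2i+1)^{2i+1}$. Fixing such a $c$ and letting $Z_2=\{v\in Z_1 : P_v\text{ is rainbow with respect to }c\}$ gives exactly the set required by the claim, with $|Z_2|\geq |Z_1|/(2i+1)^{2i+1}$. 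This is a standard first-moment (averaging) argument, so no hard analysis is involved; the whole content is the clean probability computation for a single path.

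The one point that needs care, and the only place where the argument could go wrong, is the independence and distinctness of the $p$ vertices of each path: the factor $(2i+1)^{-p}$ is correct only because the $x_k^v$ are genuinely distinct vertices, which is guaranteed by the simplicity of $P_v$ noted just before the claim, and because all paths in $Z_1$ share the same value of $p$ (ensured by Claim~\ref{z1}, since paths of the same type have the same number of vertices). I would also note $p\leq 2i+1$ so that the target colours $1,\ldots,p$ all lie in the codomain $[2i+1]$ of $c$, making the rainbow condition satisfiable. With these observations in place the expectation bound $(2i+1)^{-(2i+1)}$ is uniform over $v\in Z_1$, and the averaging step closes the claim. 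I expect no genuine obstacle here; the slight subtlety is purely bookkeeping about using the \emph{same} $p$ and the \emph{same} target colour pattern $c(x_k^v)=k$ for every path.
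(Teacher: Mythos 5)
Your proposal is correct and is essentially identical to the paper's own proof: both use a uniformly random independent colouring, compute the per-path rainbow probability as exactly $(2i+1)^{-p}\geq(2i+1)^{-(2i+1)}$, and conclude by linearity of expectation that some colouring achieves the required number of rainbow paths. Your added remarks on the distinctness of the path vertices and the common value of $p$ are sound bookkeeping that the paper leaves implicit.
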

\begin{proof}[Proof of Claim \ref{z2}]
Let $c$ be the colouring obtained by attributing to each vertex of $H$ one of the $2i+1$ possible colours uniformly at random and independently of other vertices. Then for any $v\in Z_1$ the probability that $P_v$ is rainbow with respect to $c$ is precisely $1/(2i+1)^{p}\geq 1/(2i+1)^{2i+1}$. Thus the expected number of vertices $v$ such that $P_v$ is rainbow with respect to $c$ is at least $|Z_1|/(2i+1)^{2i+1}$ and so there exists a choice of $c$ and of $Z_2$ such that the claim holds. 
\end{proof}

In order to prove that $\Gamma_{S,i}(X,Y)$ contains a large independent set in $\red$, we wish to apply Proposition \ref{pathcolouring}, in the same way as we did in the proof of lemma \ref{niceindependence} above. Ideally we would like to say that ``$\Gamma_{S,i}(X,Y)$ cannot contain a simple path $P$ of length $l-2$ since otherwise some subpath of $P$, call it $P'$, would join two vertices $a$, $b$ of $\Gamma_{S,i}(X,Y)$ such that there is a subpath $P'_a$ of $P_a$ and a subpath $P'_b$ of $P_b$, such that  $P' + P'_a + P'_b$  forms a $C_l$, a contradiction". There are several difficulties which prevent us from doing this directly, however (What happens if $P'_a$ and $P'_b$ meet each other more than once? What happens if $P'_a$ or $P'_b$ meets $P'$ more than once? How do we ensure that $P'_a + P'_b + P'$ has length $l$?) The fact that the paths $P_v$ for $v\in Z_2$ are rainbow goes a long way towards resolving these problems. Indeed, notice that if $u,v \in Z_2$ then $V(P_u) \cap V(P_v)= \{x^u_i: x^u_i = x^v_i, i\in [p] \}$ since $P_v$ and $P_u$ are rainbow. This would guarantee in the discussion above, for example, that $P_a$ and $P_b$ intersect $P$ only once each. But this is not enough, as the main problem we are faced with is to guarantee that $P'_a + P'_b + P'$ has length $l$. This is why we introduce the \emph{class} of an edge of $\red[Z_2]$ in what follows. 

Order the vertices of $H$ arbitrarily and let $<$ be the chosen total ordering. Given $e\in E(\red[Z_2])$, let $a$ be the smallest vertex under $<$ which is contained in $e$ and let $b$ be the largest one. The \emph{class} of $e$ is the tuple $(m_k)_{k=1}^p$ whose coordinates take values in $\{0,1,2\}$ such that $m_k = 0$ if $x_k^a < x_k^b$, $m_k=1$ if $x_k^a = x_k^b$ and $m_k = 2$ if $x_k^a > x_k^b$. So there are $3^p\leq 3^{2i+1}$ possible classes for an element of $E(\red[Z_2])$, and for each possible class $t$ of an element of $E(\red[Z_2])$ we let $J_t$ be the hypergraph with vertex set $Z_2$ and whose edges are all the elements of $E(\red[Z_2])$ of type $t$. Our third claim is the following.

\begin{claim}
\label{classcolouring}
For each element $t$ of $\{0,1,2\}^{p}$, $J_t$ is $(l-2)$-colourable.
\end{claim}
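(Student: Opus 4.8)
The plan is to prove Claim \ref{classcolouring} by applying Proposition \ref{pathcolouring} to each $J_t$ with the ordering $<$ already fixed on $V(H)$. Concretely, I would suppose for contradiction that some $J_t$ contains an increasing simple path $Q = y_0 y_1 \cdots y_{l-2}$ of length $l-2$ (under $<$, inherited from $\red[Z_2]$), and aim to build a copy of $C_l$ in $H + \red$, which by Lemma \ref{nocl} yields a $C_l^3$ in $H$ and hence the desired contradiction. The endpoints $a := y_0$ and $b := y_{l-2}$ both lie in $Z_2 \subseteq \Gamma_{S,i}(X,Y)$, so each comes equipped with its fixed rainbow path $P_a$ and $P_b$ in $\red[S\backslash Y]$ of length $i$ joining $a$ and $b$ respectively to vertices $x_a, x_b \in X$. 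The strategy is to splice together a suitable initial segment of $P_a$, the path $Q$, a suitable terminal segment of $P_b$, and then close up the cycle using the extender property (Condition 1 of Definition \ref{extender}) to join $x_a$ to $x_b$ by a length-two simple path in $H$ avoiding all the other vertices used.

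The role of the class $t$ is precisely to control the length of the resulting cycle and to guarantee disjointness. Since every edge of $Q$ has the same class $t = (m_k)_{k=1}^p$, the coordinatewise comparison pattern between the $P$-paths of the two endpoints of each edge is uniform along $Q$; combined with rainbowness (so that $V(P_u)\cap V(P_v) = \{x_k^u : x_k^u = x_k^v\}$ for $u,v\in Z_2$), this should force the paths $P_{y_0}, \ldots, P_{y_{l-2}}$ to agree on an \emph{initial} block of coordinates $k$ with $m_k = 1$ and to be pairwise vertex-disjoint on the remaining coordinates, in a monotone fashion dictated by whether $m_k$ is $0$ or $2$. First I would read off from $t$ the largest index $j$ such that $m_1 = \cdots = m_j = 1$: this means all the $P_{y_s}$ share their first $j$ vertices $x_1, \ldots, x_j$ (in particular $x_a = x_b$ is possible if $j\geq 1$), and diverge thereafter. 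Then the segment $P'_a$ of $P_a$ from $x_j$ (or $x_a$) to $a$ and the segment $P'_b$ of $P_b$ from $x_j$ (or $x_b$) to $b$ have controlled, equal lengths, and meet only at $x_j$. I would then check that $P'_a$, $Q$, $P'_b$ pasted together form a single simple path in $\red[S\backslash Y]$ whose total length is of the form $2(i-j) + (l-2)$, or after closing via the extender, a cycle whose length can be arranged to be exactly $l$ by choosing how much of the $P$-segments to retain — this is where the exact value $m = \lfloor(l-1)/2\rfloor$ and the hypothesis $i \leq m-1$ enter, ensuring $2i+1 \leq l-1$ so that the cycle does not overshoot length $l$ and the length-two closing path fits.

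The main obstacle, as the surrounding text flags explicitly, will be the length bookkeeping: ensuring that $P'_a + Q + P'_b$ together with the closing length-two path has length \emph{exactly} $l$ rather than something shorter or longer, while simultaneously verifying that the union is a genuinely simple path/cycle (no unintended repeated vertices). The rainbow property handles collisions between $P_a$ and $P_b$ cleanly — they can only meet in shared low-index coordinates — but I would still need to rule out that an interior vertex of $Q$ coincides with a vertex of $P'_a$ or $P'_b$; here I expect to use that $Q$ lies in $\red[Z_2]$ whereas the divergent portions of the $P$-paths wander through $S\setminus Z_2$, together with the class/ordering constraints, to force the required disjointness or else to truncate $Q$ to a subpath $Q'$ whose endpoints still lie in $Z_2$. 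The extender's Condition 1, which permits forbidding any set $S$ of up to $2l-5$ vertices while joining $x_a$ to $x_b$ by a length-two path in $H$, is exactly strong enough to dodge all previously-used vertices, since the total vertex count of $P'_a + Q + P'_b$ minus the two endpoints being joined is bounded by $2l-5$. Once the cycle is built and shown to have length $l$, Lemma \ref{nocl} closes the argument. By Proposition \ref{pathcolouring}, the absence of such an increasing path gives $(l-2)$-colourability of $J_t$, proving the claim.
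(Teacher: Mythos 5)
Your overall skeleton is the same as the paper's: apply Proposition \ref{pathcolouring}, assume an increasing simple path $Q$ of length $l-2$ in $J_t$, and use class-uniformity plus rainbowness to splice segments of the fixed $P$-paths onto a portion of $Q$, producing either a $C_l$ (contradicting Lemma \ref{nocl}) or a length-$(l-2)$ path between two vertices of $X$ (contradicting Lemma \ref{nolpath}). However, two of your concrete mechanisms fail. First, your structural claim that the coordinates on which the $P$-paths agree form an \emph{initial} block ($m_1=\cdots=m_j=1$, pairwise disjoint thereafter) is false in general: a class such as $(0,1,0,\ldots)$ is perfectly possible, since two rainbow paths of the same type may cross at an intermediate coordinate while diverging before and after it (simplicity of the pieces $H_j$ only forbids two same-piece edges sharing \emph{two} vertices, not one). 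With such a class your construction breaks down: you would read off $j=0$, treat the $P$-paths as disjoint, and close with the extender, but $P_a$ and $P_b$ still meet at the coordinate-$2$ vertex, so your ``cycle'' is not a loose cycle. The paper's fix is to take $q$ to be the \emph{largest} index with $t_q=1$, use only the tails of the two $P$-paths beyond the edge containing $x_q$, close the cycle at $x_q$ itself (no extender needed in this case), and prove the two tails meet only at $x_q$ precisely via the fact that corresponding edges of same-type paths lie in the same simple hypergraph.

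Second, your length-adjustment knob is the wrong one. You cannot ``choose how much of the $P$-segments to retain'': the cycle can only close either at a common vertex of the two $P$-paths (in which case the segments from that vertex to $a$ and to $b$ are forced in their entirety), or, when the $P$-paths are disjoint, at $X$ via the extender property (in which case all of $P_a$ and $P_b$ must be used, and one also needs $x_a\neq x_b$, since Condition 1 of Definition \ref{extender} says nothing when the two vertices coincide). The only free parameter is how many edges of $Q$ to keep, and truncating $Q$ is mandatory even in the completely disjoint case: keeping all of $Q$ gives a cycle of length $(l-2)+2i+2=l+2i\neq l$, and moreover the vertex set to be avoided then has size about $4i+2l-5$, exceeding the extender's budget of $2l-5$. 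The paper keeps exactly $e_1,\ldots,e_{l-2i-2}$ in the disjoint case (yielding a simple path of length exactly $l-2$ between two vertices of $X$, contradicting Lemma \ref{nolpath}) and exactly $e_1,\ldots,e_{l-2i+2h-2}$ in the intersecting case (yielding a cycle of length exactly $l$ through $x_q$), and the hypothesis $i\leq m-1$ is exactly what guarantees these truncated pieces of $Q$ are nonempty. You do mention truncating $Q$, but only as a fallback for vertex collisions, not as the length mechanism; as written, your cycles do not have length $l$.
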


\begin{proof}[Proof of Claim \ref{classcolouring}]
By Proposition \ref{pathcolouring} it is enough to show that $J_t$ does not contain an increasing simple path of length $l-2$ with respect to $<$. Suppose, to the contrary, that it did contain such a path $P$. Let $a$ be the smallest vertex of $P$ and $b$ the largest one (with respect to $<$). Write $E(P) = \{e_1,e_2,\ldots,  e_{l-2} \}$ with $a\in e_1$, $b\in e_{l-2}$ and $e_j \cap e_{j+1}\neq \emptyset$ for each $j \in [l-3]$. For each $j\in  [l-3]$ let $u_j$ be the vertex of $P$ belonging to $e_j \cap e_{j+1}$. Furthermore let $u_0 = a$, $u_{l-2} = b$. 

Recall that by definition of $J_t$, each of $e_1$, $e_2$,\ldots, $e_{l-2}$ has class $t$. Suppose that $P_{u_0}$ and $P_{u_1}$ do not intersect. Then, since $e_1$ has type $t$, we have $m_k = 0$ or $m_k = 2$ for any $k\in [i]$. Let $k\in[i]$. If $m_k = 0$ then we know, since each of $e_1$, $e_2$, \ldots, $e_{l-2}$ has class $t$, that $x_k^{u_0} < x_k^{u_1} < \cdots < x_k^{u_{l-2}} $. Likewise, if $m_k = 2$ we then know that $x_k^{u_0} > x_k^{u_1} > \cdots > x_k^{u_{l-2}} $. So in either case, for any $k \in [p]$, the $x_k^{u_j}$'s, $j\in [0,l-2]$, are pairwise distinct. But, since the paths $P_{u_0}, P_{u_1}, \ldots, P_{u_{l-2}}$ are rainbow with respect to $c$, this implies that $P_{u_0}\cap P_{u_j} = \emptyset$ for all $ j \in [l-2]$. Thus in particular $P_{u_0} \cap P_{u_{l-2i-2}} = \emptyset$. Hence $P_{u_0} + \{ e_1, e_2,\ldots, e_{l-2i-2} \} + P_{u_{l-2i-2}}$ is a simple path of length $l-2$ in $\red$ not meeting $Y$ and joining two vertices of $X$, a contradiction to Lemma \ref{nolpath}. 
 
Thus we may assume that $P_{u_0}$ and $P_{u_1}$ do intersect in some vertex. This means that some entry of $t$ is equal to one; let $q$ be largest such that $t_q = 1$, and let $h$ be largest such that $x^{u_0}_q\in f^{u_0}_h$. Let $A = \{ f^{u_0}_h, f^{u_0}_{h+1}, \ldots, f^{u_0}_i \}$, $B = \{ f^{u_{l-2i+2h-2}}_h, f^{u_{l-2i+2h-2}}_{h+1}, \ldots, f^{u_{l-2i+2h-2}}_i \} $ and $D = \{e_1,e_2,\ldots, e_{l-2i+2h-2} \}$. We shall now prove that $C := A + D + B $ is a $C_l$ in $H + \red$, which is a contradiction by Lemma \ref{nocl} and thus finishes the proof of the claim. Let $W = V(A) \cap V(B)$. To prove that $C$ is a $C_l$ it is enough to prove that $W= \{x^{u_0}_q \}$, since the rainbow property of $P_{u_0}$ and $P_{u_{l-2i+2h-2}}$ implies that $A$ and $B$ only intersect $D$ in $u_0$ and $u_{l-2i+2h-2}$ respectively. Since $t_q = 1$ we have $x^{u_0}_q = x^{u_1}_q$, $x^{u_1}_q = x^{u_2}_q$, \ldots, $ x^{u_{l-2i+2h-3}}_q = x^{u_{l-2i+2h-2}}_q$ and so $x^{u_0}_q = x^{u_{l-2i+2h-2}}_q$. Thus $x^{u_0}_q \in W$. Suppose now that there exists $u \in W$, $u\neq x_q^{u_0}$. So $u = x^{u_0}_w$ for some $w\neq q$. If $w>q$ then as $t_w\neq 1$ we either have $x^{u_0}_w < x^{u_1}_w < \ldots < x^{u_{l-2i+2h-2}}_w$ or $x^{u_0}_w > x^{u_1}_w > \ldots > x^{u_{l-2i+2h-2}}_w$ and so either way $x^{u_0}_w \neq x^{_{l-2i+2h-2}}_w$. But then as the paths $P_{u_0}$ and $P_{u_{l-2i+2h-2}}$ are rainbow, $x^{u_0}_w \not\in W$, a contradiction. Thus $w < q$. Then $w\in f^{u_0}_{h}$ since $w <q$, $w\in A$ and $x_q^{u_0} \in f^{u_0}_{h}$. Also $w\in f^{u_{l-2i+2h-2}}_{h}$ by the rainbow property of $P_{u_0}$ and $P_{u_{l-2i+2h-2}}$. Hence $\{x_q^{u_0},x_w^{u_0} \}\subseteq f^{u_0}_h\cap f^{u_{l-2i+2h-2}}_h$  and so $|f^{u_0}_{h}\cap f^{u_{l-2i+2h-2}}_h|\geq 2$. Notice also that $f^{u_0}_{h}\neq f^{u_{l-2i+2h-2}}_h$ (if not then by definition of $q$ this can only happen if $x_q^{u_0}\in f^{u_0}_{h+1}$ and this contradicts the definition of $h$). But $f^{u_0}_{h}$ and $f^{u_{l-2i+2h-2}}_h$ belong to the same simple hypergraph $H_j$ (for some $j$) since $P_{u_0}$ and $P_{u_{l-2i+2h-2}}$ are of the same type, and this is a contradiction. The claim is proved.
\end{proof}

Since $J_t$ is $(l-2)$-colourable for any class $t$, we see that $\red[Z_2]$ is $(l-2)^{3^{2i+1}}$-colourable, since any edge of $\red[Z_2]$ belongs to $J_t$ for some $t$ and there are at most $3^{2i+1}$ values of $t$.  We therefore have the following.

\begin{claim}
\label{z3}

$Z_2$ contains a set $Z$ of size at least $|Z_2|/(l-2)^{3^{2i+1}}$ which is independent in $\red$.
\end{claim}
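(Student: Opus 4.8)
The plan is to deduce the claim directly from Claim \ref{classcolouring} by superposing the colourings of the individual hypergraphs $J_t$ into a single colouring of $\red[Z_2]$, and then invoking the standard observation recorded in Section 2 that a $k$-colourable hypergraph on $m$ vertices contains an independent set of size at least $m/k$.

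First I would recall that the edge set $E(\red[Z_2])$ is covered by the hypergraphs $J_t$ as $t$ ranges over the at most $3^{2i+1}$ possible classes: every edge of $\red[Z_2]$ was assigned exactly one class and hence lies in the corresponding $J_t$. By Claim \ref{classcolouring}, for each class $t$ there is a proper colouring $c_t:Z_2\longrightarrow[l-2]$ of $J_t$. I would then combine these into a single colouring $C$ of $Z_2$ by setting $C(v)=(c_t(v))_t$, the tuple of colours indexed by the classes $t$. Since there are at most $3^{2i+1}$ classes and each coordinate takes one of $l-2$ values, $C$ uses at most $(l-2)^{3^{2i+1}}$ colours. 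To see that $C$ is a proper colouring of $\red[Z_2]$, take any edge $e\in E(\red[Z_2])$; it belongs to some $J_t$, and since $c_t$ properly colours $J_t$ there are $a,b\in e$ with $c_t(a)\neq c_t(b)$, whence $C(a)\neq C(b)$. Thus $\red[Z_2]$ is $(l-2)^{3^{2i+1}}$-colourable.

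Finally I would take $Z$ to be a largest colour class of $C$. Because $C$ is proper on $\red[Z_2]$ and $\red[Z_2]$ contains every edge of $\red$ lying inside $Z_2$, no edge of $\red$ can be contained in the monochromatic set $Z$, so $Z$ is independent in $\red$; and by the pigeonhole principle $|Z|\geq|Z_2|/(l-2)^{3^{2i+1}}$, as required.

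There is essentially no remaining obstacle, since all the genuine difficulty has been absorbed into Claim \ref{classcolouring}. The only point needing a little care is the verification that the superposition of the proper colourings of the $J_t$ is itself proper; this relies crucially on the fact that the $J_t$ together cover \emph{all} of $E(\red[Z_2])$, not merely a large fraction of it, so that each edge is guaranteed to be bichromatic in at least one coordinate of $C$.
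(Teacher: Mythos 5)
Your proof is correct and follows exactly the paper's (much terser) argument: the paper likewise deduces from Claim \ref{classcolouring} that $\red[Z_2]$ is $(l-2)^{3^{2i+1}}$-colourable by noting every edge lies in some $J_t$ with at most $3^{2i+1}$ classes, and then takes a largest colour class. You have merely spelled out the product-colouring superposition and the passage from colourability to independence, both of which the paper leaves implicit.
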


Lemma \ref{neighbourhoodindependence} is now proved.
\end{proof}

\section{Proof of Theorem \ref{maintheorem}}
\label{maintheoremsection}
Before proving Theorem \ref{maintheorem} in earnest, let us briefly explain how one might find a large independent set in a $C^2_l$-free 2-graph. Let $m = \lfloor (l-1)/2 \rfloor $. As mentioned at the beginning of Section \ref{extendersection}, a simple bound on the independence number of a $C^2_l$-free 2-graph $G$ can be found by considering the average degree $d$ of $G$. However, when $l\geq 5$, we can do better. Indeed in a ``typical" 2-graph $G$ of average degree $d$, we expect the $m^{\text{th}}$ neighbourhood of a vertex to have size about $d^m$. This is why, if $d$ is large, it might be a better idea to seek an independent set in $\Gamma_m(v)$ rather than $\Gamma(v)$ (since $\Gamma_m(v)$ is $(l-2)$-colourable), and when $d$ is small to still apply Tur\'an's theorem as explained in Section  \ref{extendersection}. This in itself is not a valid argument, obviously, since it is not the case that $|\Gamma_m(v)|\geq d^m $ (for some $v$) in every graph. But, if the $m^{\text{th}}$ neighbourhood of a vertex is bounded in $G$, then it is a better idea to look at the $i^{\text{th}}$ neighbourhood of a vertex for $1\leq i\leq m-1$. In fact, both \cite{lizang} and \cite{sudakov} either find a large independent set which is the union of large subsets of $i^{\text{th}}$ neighbourhoods of vertices (where some care is needed to make sure that there is no edge between these sets), or find an induced subgraph of $G$ of small average degree, where one can then apply Tur\'an's Theorem to find a large independent set. We adopt the same strategy, and are able to improve upon the graph bound because we are able to put ourselves in the position of applying Proposition \ref{lowdegreebound} with $r=3$ rather than Tur\'an's Theorem. The main difficulty for hypergraphs is to introduce useful definitions of what is meant by a ``neighbourhood" and this was the subject of the previous sections. 

Let $H$ be a $C_l^3$-free 3-graph on $n$ vertices. The statement of Theorem \ref{maintheorem} is equivalent to proving that the independence number of $H$ is at least a constant times $n^{(m+1)/(m+2)}$ where $m =\lfloor (l-1)/2 \rfloor$, and this is what we shall prove. Let us notice that if $l = 3$ or $l=4$, i.e. $m = 1$, then the theorem can be proved as follows: either $H$ contains an extender of size at least $n^{2/3}$ hence contains an independent set of size at least $n^{2/3}/(l-2)$ by Lemma \ref{niceindependence} or has average degree no more than $24l^2n^{2/3}$ by Lemma \ref{existence} and hence contains an independent set of size at least $0.5n/(24l^2n^{2/3})^{1/2} = n^{2/3}/(4\sqrt{6}l)$ by Proposition \ref{lowdegreebound} with $r=3$. Thus henceforth we shall assume $l\geq 5$ and so $m\geq 2$.  

Notice that for every extender $(X,Y)$ in $H$ such that $|X| \geq n^{2/(m+2)}$ and every $S\subseteq V(H)$ containing $X\cup Y$, we may assume that there exists an integer $i\in [0,m-2]$ such that $|\Gamma_{S,i+1}(X,Y)| \leq n^{1/(m+2)}|\Gamma_{S,i}(X,Y)|$ for otherwise we have $|\Gamma_{S,m-1}(X,Y)|\geq n^{(m+1)/(m+2)}$ and so by Lemma \ref{neighbourhoodindependence} we can find an independent set of size at least $n^{(m+1)/(m+2)}/b_l$ in $H$.

Consider the following procedure producing a sequence $((X_k,Y_k))_{k\in[t]}$ of extenders of $H$, a sequence $(S_k)_{k\in [t+1]}$ of subsets of $H$, and a sequence $(i_k)_{k\in [t]}$ of elements of $[0,m-2]$.
\begin{itemize}
\item
Initially, we let $S_1 = V(H)$. If there is no extender $(X,Y)$ in $H$ with $|X|\geq n^{2/(m+2)}$ then we STOP. Otherwise we let $(X_1,Y_1)$ be an extender in $H$ with $|X_1| \geq n^{2/(m+2)}$, and we let $i_1$ be the least $i\in [0,m-2]$ such that $|\Gamma_{S_1,i+1}(X_1,Y_1)| \leq n^{1/(m+2)}|\Gamma_{S_1,i}(X_1,Y_1)|$. 
\item
Having obtained sequences $((X_j,Y_j))_{j\in[k]}$, $(S_j)_{j\in [k]}$ and $(i_j)_{j\in [k]}$, we let $$S_{k+1} = V(H) \backslash \left(\bigcup_{j\in [k]} \left(Y_j \cup \Gamma_{S_j,\leq i_j+1}(X_j,Y_j) \right)\right).$$ If there is no extender $(X,Y)$ in $H[S_{k+1}]$ with $|X| \geq n^{2/(m+2)}$ then we STOP. Otherwise, we let $(X_{k+1},Y_{k+1})$ be such an extender. This is clearly an extender in $H$, as $H[S_{k+1}]$ is a subhypergraph of $H$. We let $i_{k+1}$ be the least $i\in [0,m-2]$ such that $|\Gamma_{S_{k+1},i+1}(X_{k+1},Y_{k+1})| \leq n^{1/(m+2)}|\Gamma_{S_{k+1},i}(X_{k+1},Y_{k+1})|$. 
\end{itemize}
Clearly, this procedure must terminate. The sequence produced has the following important property. 
\begin{multline}
\label{eq1}
\text{For every $k_1, k_2 \in [t]$ with $k_1 < k_2$, } \\ \left(Y_{k_1} \cup \Gamma_{S_{k_1},\leq i_{k_1}+1}(X_{k_1},Y_{k_1})\right)\cap \Gamma_{S_{k_2}, i_{k_2}}(X_{k_2},Y_{k_2})=\emptyset. 
\end{multline}

Suppose first that $|S_{t+1}| \geq n/2$. Then $H[S_{t+1}]$ contains no extender of size at least $n^{2/(m+2)}$. By Lemma \ref{existence} this implies that the average degree of $H[S_{t+1}]$ is no more than $24l^2n^{2/(m+2)}$. Hence by Proposition \ref{lowdegreebound} $\alpha(H[S_{t+1}]) \geq 0.5(n/2)/(24l^2n^{2/(m+2)})^{1/2} = n^{(m+1)/(m+2)}/(8\sqrt{6}l)$. But clearly $\alpha(H) \geq \alpha(H[S_{t+1}])$ and so we are done. 

So we may assume that that $|S_{t+1}|\leq n/2$. We shall find a large set which is independent in $\red$ (rather than $H$). As mentioned above in Section \ref{extendersection} such a set is also independent in $H$. The reason why we look for an independent set in $\red$ rather than $H$ is that neighbourhoods of extenders are defined in terms of paths in $\red$ rather than $H$.  Let $$T =  \bigcup_{k\in [t]}\left(Y_k \cup \Gamma_{S_k,\leq i_k+1}(X_k,Y_k) \right).$$
Since $S_{t+1}\leq n/2$ we have $|T| \geq n/2$. For each $k\in [t]$, by the definition of $i_k$ we have $|\Gamma_{S_k,i_k}(X_k,Y_k)| \geq |\Gamma_{S_{k},i}(X_k,Y_k)|$ for any $i\leq i_k$ and $|\Gamma_{S_k,i_k}(X_k,Y_k)|\geq |\Gamma_{S_k,i_k+1}(X_k,Y_k)|/n^{1/(m+2)}$. Thus,
$$ |\Gamma_{S_k,i_k}(X_k,Y_k)|\geq \left|Y_k \cup \Gamma_{S_k,\leq i_k+1}(X_k,Y_k)\right|/((m+2)n^{1/(m+2)})$$
(Recall that by definition of an extender, $|Y_k| \leq 2|X_k|$.) By Lemma \ref{neighbourhoodindependence} (Lemma \ref{niceindependence} when $i_k = 0$), for every $k\in [t]$, $\Gamma_{S_k,i_k}(X_k,Y_k)$ contains a set $Z_k$ of size at least $|\Gamma_{S_k,i_k}(X_k,Y_k)|/b_l$ which is independent in $\red$. Let $Z = \cup_{k\in[t]}Z_k $, so that $|Z|\geq |T| / (b_l(m+2)n^{1/(m+2)}) \geq n^{(m+1)/(m+2)}/(2b_l(m+2))$. We shall find $W\subseteq Z$ of size at least $|Z|/2^{2m-1}$ which is independent in $\red$, and this shall finish the proof of Theorem \ref{maintheorem}.  

Let $v\in Z$. Then $v\in Z_k$ for some $k\in [t]$. Suppose $i_k \geq 1$. Then as $v\in \Gamma_{S_k,i_k}(X_k, Y_k)$, there exists a simple path $P_v$ in $\red[S_k]$ which joins $v$ to an element of $X_k$, which is disjoint from $Y_k$, and which has length $i_k$. As in the proof of Lemma \ref{neighbourhoodindependence}, we select one such path $P_v$ for each $v\in W$ and fix our choice for the rest of the proof. If $i_k = 0$ we let $P_v = \{v \}$. Let $c:V(H) \longrightarrow \{\text{blue}, \text{red} \}$ be a (not necessarily proper) 2-colouring of the vertices of $H$. We say that $P_v$ is \emph{well-coloured} by $c$ if the colour given to $v$ is red and the colour of any other vertex of $P_v$ is blue.  If $c$ is a colouring obtained by randomly and uniformly assigning the colour blue or red to each vertex of $H$, independently of other vertices, then the probability that $P_v$ is well-coloured is clearly $(1/2)^{|V(P_v)|}$, which is at least $(1/2)^{2i_k+1}$.  Therefore by a mere expectation argument there exists a colouring $c$ and $W\subseteq Z$ with $|W|\geq |Z|/2^{2m-1}$ such that for each $v\in W$, $P_v$ is well-coloured. 

Let us check that $W$ is independent in $\red$. Indeed suppose to the contrary that it isn't. Let $e\in \red[W]$. As each $Z_k$ is independent in $\red$, $e$ meets at least two distinct $Z_k$'s. Let $k_1$ be minimal such that $e\cap Z_{k_1}\neq \emptyset$. Let $k_2 > k_1$ be such that $e\cap Z_{k_2}\neq \emptyset$. Let $a\in e\cap Z_{k_1}$ and let $b\in e\cap Z_{k_2}$. By definition of $k_1$ and the fact that $S_{k_1} \supseteq S_{k_1 + 1} \supseteq S_{k_1 + 2} \supseteq \cdots$, we have $e\subseteq S_{k_1}$, in other words $e\in E(\red[S_{k_1}])$. By \eqref{eq1} we have that $e\cap Y_{k_1} = \emptyset$ (Indeed, \eqref{eq1} implies that that all the elements of $e$ not lying in $\Gamma_{S_{k_1}, i_{k_1}}(X_{k_1},Y_{k_1})$ do not lie in $Y_{k_1}$.) Thus in fact $e\in E(\red[S_{k_1}\backslash Y_{k_1}])$. 

We now consider two different cases: $i_{k_1} = 0$ and $i_{k_1} \geq 1$. Consider first the case $i_{k_1} = 0$. Since $e$ joins $b$ to $a \in X_{k_1}$ and $e\in E(\red[S_{k_1} \backslash Y_{k_1}])$, we have $b \in \Gamma_{S_{k_1},\leq 1}(X_{k_1},Y_{k_1})$.  This is a contradiction to \eqref{eq1} given that $b \in \Gamma_{S_{k_2},k_2}(X_{k_2},Y_{k_2})$. Hence we may assume that $i_{k_1} \geq 1$. In this case, as $P_a$ is well-coloured by $c$, its sole red vertex is $a$, and since $e\subseteq W$, all its vertices are coloured red. Hence $P_a \cap e = \{a \}$, so that the path $P_a + e$ is simple. But then $P_a + e$, being a simple path in $\red[S_{k_1}\backslash Y_{k_1}] $ of length $i_{k_1} + 1$ joining $b$ to an element of $X_{k_1}$, shows that $b \in \Gamma_{S_{k_1},\leq i_{k_1} + 1}(X_{k_1},Y_{k_1})$. This is a contradiction to \eqref{eq1} and finishes the proof of Theorem \ref{maintheorem}.  

\section{Proof of Theorem \ref{graphbound}}
Here we shall give a sketch of the proof of Theorem \ref{graphbound}. Let $r$, $l$ be fixed integers with $r\geq 2$ and $l$ odd, $l\geq 5$. Let $H$ be a $C_l^r$-free $r$-graph on $n$ vertices. 

The first step of the proof is to show that there exists a $C_l$-free $[2,r]$-graph $H'$ such that $V(H') = V(H)$, $\alpha (H) \geq \alpha(H')$ and $E(H')$ can be partitioned into a constant number of simple hypergraphs. We shall use the same reduction ideas as in Lemma 7.5 and Lemma 7.6 of \cite{colgrajiang}, but we go one step further by partitioning $E(H')$ into simple hypergraphs. A sunflower $\mathcal{S}$ with core $C$ is a collection $\mathcal{S}$ of sets such that $e,f\in \mathcal{S}$ and $e\neq f$ implies $e\cap f = C$. If $|\mathcal{S}| = p$ and $|C| = a$ then we say that $\mathcal{S}$ is an $(a,p)$-sunflower. The Sunflower Lemma is the following statement.

\begin{proposition}[P. Erd\"{o}s, R. Rado \cite{erdosrado}]
\label{sunflower}
Let $\mathcal{F}$ be a collection of sets of size at most $r$. If $|\mathcal{F}| \geq r!(p-1)^r$ then $\mathcal{F}$ contains a sunflower with $p$ members. 
\end{proposition}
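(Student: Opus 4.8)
The plan is to prove this classical fact of Erd\"{o}s and Rado by induction on $r$, the maximum size of a member of $\mathcal{F}$, exploiting a maximal pairwise-disjoint subfamily. With the (essentially equivalent) strict-inequality convention the base case $r=0$ is vacuous, since a set of size at most $0$ is empty and so a family of more than $r!(p-1)^r=1$ such sets cannot exist; equivalently one may take $r=1$ as the base, where any $p$ distinct singletons are pairwise disjoint and hence form a $p$-sunflower with empty core.

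For the inductive step I would first extract a maximal collection $\mathcal{D}=\{D_1,\dots,D_k\}$ of pairwise disjoint members of $\mathcal{F}$. If $k\geq p$ then $\mathcal{D}$ is itself a $p$-sunflower with empty core and we are done. Otherwise $k\leq p-1$, whence the union $Y=\bigcup_{j=1}^{k}D_j$ satisfies $|Y|\leq (p-1)r$; and by maximality of $\mathcal{D}$ every member of $\mathcal{F}$ must meet $Y$, as any set disjoint from $Y$ could be adjoined to $\mathcal{D}$.

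Since every member of $\mathcal{F}$ meets $Y$, the pigeonhole principle yields an element $y\in Y$ contained in at least $|\mathcal{F}|/|Y|\geq r!(p-1)^r/((p-1)r)=(r-1)!(p-1)^{r-1}$ members of $\mathcal{F}$. I would then delete $y$ from each such member to form $\mathcal{F}'=\{F\setminus\{y\}:F\in\mathcal{F},\ y\in F\}$, a family of sets of size at most $r-1$; these remain pairwise distinct because adjoining the common element $y$ recovers the original sets, so $|\mathcal{F}'|\geq (r-1)!(p-1)^{r-1}$. The inductive hypothesis then furnishes a $p$-sunflower in $\mathcal{F}'$, with some core $C'$ not containing $y$, and reinserting $y$ into each petal produces a $p$-sunflower in $\mathcal{F}$ with core $C'\cup\{y\}$.

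The step requiring most care---the only real obstacle---is the arithmetic bookkeeping that makes the induction close: one checks the identity $r!(p-1)^r/((p-1)r)=(r-1)!(p-1)^{r-1}$ so that the pigeonhole count delivers to the inductive hypothesis exactly the threshold it needs, and one fixes the strict-versus-non-strict convention together with the treatment of the empty set and of sets of size below $r$, so that the hypothesis ``size at most $r$'' is genuinely handled. Beyond this, the sole idea is the reduction from $\mathcal{F}$ to $\mathcal{F}'$ through a heavily used point of the union of a maximal disjoint family.
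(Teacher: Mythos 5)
The paper contains no proof of this proposition to compare against: it is stated as a classical theorem of Erd\"{o}s and Rado, with the citation standing in for the argument, and it is used as a black box in the sketched proof of Theorem \ref{graphbound}. Judged on its own merits, your proof is the standard Erd\"{o}s--Rado induction on $r$ via a maximal pairwise disjoint subfamily, and its core is sound: either that subfamily has $p$ members (a sunflower with empty core), or its union $Y$ has size at most $(p-1)r$ and meets every member of $\mathcal{F}$, so some $y \in Y$ lies in at least $|\mathcal{F}|/|Y| \geq (r-1)!(p-1)^{r-1}$ members; passing to the link $\mathcal{F}'=\{F\setminus\{y\}: y\in F\in\mathcal{F}\}$ (injective, as all these sets contain $y$) and reinserting $y$ into the core supplied by induction closes the argument precisely because $r!(p-1)^r/((p-1)r)=(r-1)!(p-1)^{r-1}$.

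Two boundary points that you only gesture at do need to be nailed down, though each has a one-line fix. First, the proposition as printed, with the non-strict bound $|\mathcal{F}|\geq r!(p-1)^r$, is literally false at $r=1$: a family of $p-1$ pairwise disjoint singletons meets the bound but has too few sets to contain a sunflower with $p$ members. The correct classical statement has strict inequality $|\mathcal{F}|>r!(p-1)^r$, which is the convention you adopt; so what you prove is the corrected statement, not the literal one --- harmless here, since the paper uses the proposition only up to constants. Second, because ``size at most $r$'' admits $\emptyset$, the claim that every member of $\mathcal{F}$ meets $Y$ can fail for the single member $\emptyset$; the repair is to note that $\emptyset$ must itself belong to any maximal disjoint subfamily, so in that case $|Y|\leq (p-2)r$ and the pigeonhole count still strictly exceeds $(r-1)!(p-1)^{r-1}$ (or, more simply, discard $\emptyset$ at the outset). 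One should also check that strictness of the inequality propagates through the pigeonhole step, which it does because the counts involved are integers. These conventions are exactly what you flag as ``the step requiring most care''; they are the only gaps, and with them patched your proof is a complete and correct proof of the result the paper cites without proof.
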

To find $H'$, one can now proceed as follows. Suppose $H$ contains an $(a,rl)$-sunflower $\mathcal{S}$ for $a \geq 2$. Let $C$ be the core of $\mathcal{S}$. Remove all the edges of $H$ containing $C$ and then add $C$ to $H$. It can be checked that this does not increase the independence number of $H$ or create a $C_l$. Repeat this procedure until no $(a,rl)$-sunflower exists in $H$ with $a\geq 2$. Call the resulting hypergraph $H'$. 

Clearly no pair of vertices of $H'$ is contained in $r!(rl-1)^r$ edges of $H'$ else by Proposition \ref{sunflower} $H'$ would contain an $(a,rl)$-sunflower with $a\geq 2$. This implies that the graph with vertex set $E(H')$ where $e$ and $f$ are adjacent if $|e\cap f|\geq 2$, has maximal degree less than ${r\choose 2} r!(rl-1)^r $, and hence by the same argument as in Lemma \ref{lightedgesdecomposition} we see that $E(H')$ can be partitioned into ${r\choose 2} r!(rl-1)^r$ simple hypergraphs, which we denote by $H'_1$, $H'_2$, \ldots, $H'_p$, where $p = {r\choose 2} r!(rl-1)^r$.

Since $\alpha(H) \geq \alpha(H')$, it is enough to find in $H'$ an independent set of size at least a constant times $n^{m/(m+1)}$ where $m = \lfloor l/2 \rfloor$. We shall proceed  as in the proof of Theorem \ref{maintheorem}. 

Extenders are defined in the same way as before, except that now we require $|Y|\leq (r-1)|X|$. In a simple $[2,r]$-graph there is always an extender $(X,Y)$ of size at least $d/r$ where $d$ is the average degree of the $[2,r]$-graph in question (consider the neighbourhood of a vertex of maximal degree in the hypergraph). Hence in any subhypergraph of $H'$ there is an extender of size at least a constant times the average degree of that subhypergraph, because its edges can be partitioned into a constant number of simple hypergraphs. 

The $i^\text{th}$ neighbourhood of an extender within a set $S$ is defined as before except that we regard $(H')^*$ as being equal to $H'$ (Because ``$H'$ is already reduced".) Thus as before, for an extender $(X,Y)$, if $0 \leq i\leq m-1$ then $\Gamma_{S,i}(X,Y)$ contains a large independent set in $H'$. To see why, it suffices to follow the argument of Lemma \ref{neighbourhoodindependence} where the hypergraphs $H_0$, $H_1$, \ldots, $H_{6l-11}$ are replaced by $H'_1$, $H'_2$, \ldots, $H'_p$ and $\red$ by $H'$. Here we modify the definition of the type of a path $P_v$ slightly: for $P_u$ and $P_v$ to be of the same type, we require as before that corresponding edges along $P_u$ and $P_v$ belong to the same simple hypergraph but we now also require that they have the same size. This only affects the bound obtained on $|Z|$ in Lemma \ref{neighbourhoodindependence} by a constant factor.  

Finally we proceed as in Section \ref{maintheoremsection}, but with different parameters. Namely, an extender $(X,Y)$  is added to the sequence if $|X| \geq n^{1/(m+1)}$, and $i_{k+1}$ is the least $i\in [0,m-2]$ such that $|\Gamma_{S_{k+1},i+1}(X_{k+1},Y_{k+1})|\\ \leq n^{1/(m+1)}|\Gamma_{S_{k+1}, i}(X_{k+1},Y_{k+1})|$ (As before, it is easy to see that we may assume without loss of generality that $i_{k+1}$ exists.)  If $|S_{t+1}|\geq n/2$, then $H'[S_{t+1}]$ is a $[2,r]$-graph of average degree no more than a constant times $n^{1/(m+1)}$. By applying Proposition \ref{lowdegreebound} with $r=2$ to the 2-graph with vertex set $S_{t+1}$ and set of edges $\{\{u,v\} \subseteq S_{t+1}: \exists e\in H'[S_{t+1}] \text{ s.t. } \{u,v\} \subseteq e \}$ we find an independent set of size at least a constant times $n^{m/(m+1)}$ in $H'$. If $|S_{t+1}|\leq n/2$ then we follow the rest of the proof of Theorem \ref{maintheorem} (where $(H')^* = H'$); no $Z_k$ can contain an edge of $H'$ by Lemma \ref{neighbourhoodindependence} (and Lemma \ref{niceindependence}) and the existence of an edge of $H'$ containing vertices from two distinct $Z_k$'s again eventually implies a violation of \eqref{eq1}. 

\section{Further Remarks}
\label{remarks}
We mentioned in the introduction that we believe that one should be able, for any fixed $r\geq 3$ and $l\geq 3$, to beat the graph bound for $R(C_l^r,K_n^r)$ by an order of magnitude. However, the obvious generalization of the methods we use fails for $r\geq 4$ and $l\geq 4$. An example of a 4-uniform hypergraph where our attempts fail  is the following. Let $V(H) = [2n]$ and let $E(H) = \{ \{i,i+n,j,j+n \}: i,j \in [n], i\neq j \}$. It is clear that this hypergraph contains no loose cycle since any two of its edges meet in 0 or 2 vertices. Naturally, this hypergraph does contain a very large independent set, but there are no useful extenders in this hypergraph, so there is no obvious argument which can make use of Proposition \ref{lowdegreebound}.  It might still be possible to improve upon the graph bound for some specific values of $r$ and $l$, but as no straighforward generalization seems possible we did not cover this here. The first natural case to consider is $R(C_5^4,K_n^4)$, where we do not know how to beat the graph bound of $O(n^{3/2})$.

\section{Acknowledgement}

The author wishes to thank Andrew Thomason for his very helpful comments and advice which resulted in a much better presentation of this paper.

\end{document}